\newtheorem{theorem}{Theorem}[section]
\newtheorem{corollary}[theorem]{Corollary}
\newtheorem{lemma}[theorem]{Lemma}
\newtheorem{proposition}[theorem]{Proposition}
\newtheorem{claim}[theorem]{Claim}
\theoremstyle{remark}
\newtheorem{remark}{Remark}[section]
\newcommand{\deri}{\frac{d}{dt}}
\newcommand{\intR}{\int_\mathbb{R}}
\newcommand{\intp}{\int_0^{\infty}}
\newcommand{\im}{\text{Im}}
\newcommand{\re}{\text{Re}}
\newcommand{\R}{\mathbb{R}}
 \numberwithin{equation}{section}
\DeclareMathOperator{\sech}{sech}
\begin{document}

\title{Decay of small odd solutions for long range Schr\"odinger and Hartree equations in one dimension}
\author{Mar\'ia E. Mart\'inez\\ Departamento de Ingener\'ia Matem\'atica DIM \\ FCFM Universidad de Chile\\
{\tt maria.martinez.m@uchile.cl}\footnote{Partially funded by Conicyt Beca Doctorado Nacional no. 21192076, CMM Conicyt PIA AFB170001 and Fondecyt Regular 1150202 and 1191412. Part of this work was done while the author was visiting Universit\'e Paris-Saclay (Sud), Institut Mittag-Leffler and Universidad de Granada FisyMat, whose support is greatly acknowledged.}}
\date{\today}
\maketitle

\begin{abstract}
We consider the long time asymptotics of (not necessarily small) odd solutions to the nonlinear Schr\"odinger equation with semi-linear and nonlocal Hartree nonlinearities, in one dimension of space. We assume data in the energy space $H^1(\R)$ only, and we prove decay to zero in compact regions of space as time tends to infinity. We give three different results where decay holds: semilinear NLS, NLS with a suitable potential, and defocusing Hartree. The proof is based on the use of suitable virial identities, in the spirit of nonlinear Klein-Gordon models \cite{KMM2}, and covers scattering sub, critical and supercritical (long range) nonlinearities. No spectral assumptions on the NLS with potential are needed.
\end{abstract}

\tableofcontents


\section{Introduction}


In this paper our goal is to study the long time behavoir of \emph{small odd} global solutions of the one-dimensional nonlinear Schr\"odinger (NLS) and Hartree equations
	\begin{equation}\label{equation}
 		iu_t+u_{xx}=g(u), \quad (t, x) \in \mathbb{R}\times \mathbb{R}.
 	\end{equation}
In the Schr\"odinger case (see Ginibre-Velo \cite{GinibreVelo}, Cazenave-Weissler \cite{CW} and Cazenave \cite{Cazenave}), we shall assume that the nonlinearity takes the form
	\begin{equation}\label{nls}
		g(u)=\mu V(x)u+ f\left(|u|^2\right)u,
	\end{equation}
where the potential $V :\R \to \R$ is a \emph{Schwartz even function} and $f:\R \to \R$ is a function such that for $1<p<5$ ($L^2$ subcritical case), 
	\begin{equation}\label{hipotesisf}
		|f(s)|\lesssim s^{\frac{p-1}{2}}, 
	\end{equation}
and that satisfies that $f \circ s^2$ is locally Lipschitz continuous. In this context, we denote $F(s)=\int_0^s f(v) dv$, for all $s>0$, and 
	\[ G(u)= \frac {\mu }{2} \intR V(x) |u|^2 dx + {\frac12}  \intR F(|u|^2) dx. \]
 \par In the Hartree case, we have
	\begin{equation}\label{nlh} 	
		g(u)=\sigma \left( W*|u|^2\right)u, 	\quad G(u)= \frac{\sigma}{4}\intR \left( W*|u|^2\right)|u|^2 dx,
	\end{equation} 	
where $\sigma=\pm 1$ and the potential $W$ is given by
	\begin{equation}\label{nlh1}
	 W(x)=\frac{1}{|x|^a}, \quad \text{with }\quad 0<a<1.
	\end{equation}	
\par The equation \eqref{equation} is Hamiltonian, and it is characterized by having at least the following conservation laws: 
\begin{itemize}
	\item Mass:
	\begin{equation}\label{mass}
	M(u(t)):= \intR|u(t)|^2dx=M(u(0)),
	\end{equation}
	\item Energy:
	\begin{equation}\label{energy}
	E(u(t)):=\frac{1}{2} \intR |\nabla u(t)|^2 dx + G(u(t))=E(u(0)),
	\end{equation}
	\item Momentum:
	\begin{equation}\label{momentum}
	P(u(t)):=\im \intR u(t)\overline{u}_x(t)dx = P(u(0)).
	\end{equation}
\end{itemize}	

The NLS equation (\ref{equation})-(\ref{nls}) with nonlinearity $f(s)=\pm s^{\frac{p-1}{2}}$ is commonly known as the semilinear Schr\"odinger
equation \cite{Cazenave}. In particular, if $f(s)= -s^{\frac{p-1}{2}}$, we say that the equation is focusing, while the defocusing case takes place when $f(s)= s^{\frac{p-1}{2}}$. It is well-known that this one-dimensional semilinear Sch\"odinger equation is globally well-posed for initial data in $H^1(\mathbb R)$ when $1<p<5$, and blow up may occur if $p\geq 5$, see e.g. \cite{Glassey,MR} and subsequent works.

\medskip

On the other hand, the Hartree equation \eqref{equation} with \eqref{nlh} is also locally well-posed in $H^1(\R)$, and globally well-posed for small data, see \cite[Corollary 6.1.5]{Cazenave} for instance. This comes from the fact that the potential $W$ in \eqref{nlh1} is an even function that satisfies the following properties:
\begin{itemize}
	\item $W \in L^1(\R)+L^{\infty}(\R),$
	\item The function $(W*|u|^2)|u|^2$ is integrable. For the case \eqref{nlh1}, one has the estimate $$ \intR \left( |x|^{-a}*|u|^2\right)|u|^2 dx <\infty,$$
	(we prove this using the Hardy-Littlewood-Sobolev inequality \cite[Theorem 4.3, p. 106]{LiebLoss} with $p=r=\frac{2}{2-a}$).
\end{itemize}	
	This means that we are in the case of 
	 \cite[Example 3.2.11]{Cazenave} and \cite[Corollary 4.3.3]{Cazenave}, which implies the local well-posedness of the Hartree equation.
\par 
\medskip

In this paper we are interested in the asymptotic behavior of small solutions to \eqref{equation}, both in the NLS case (with and without potential), and in the nonlocal Hartree case, at least in the defocusing case. The literature on this subject is huge; we present now a (far from complete) account of the most relevant results. 
\medskip

\par It is known that for subcritical (in the sense of GWP and scattering) semilinear NLS equation ($f(s)= \pm s^{\frac{p-1}{2}}$, $3<p<5$), scattering to a free solution exists (see, for instance, Ginibre and Velo \cite{GinibreVelo}, Tsutsumi \cite{Tsutsumi0} and Nakanishi-Ozawa \cite{NO}). Nevertheless, in Strauss \cite{Strauss} and Barab \cite{Barab} it was proven that one cannot expect the same scattering for the critical $(p=3)$ and super critical case ($p<3$), and modified scattering is believed to occur. This was generalized recently by Murphy and Nakanishi \cite{MurphyNakanishi} for the semilinear NLS equation with potential and Hartree-type nonlinearities as \eqref{nlh1}. 

\medskip

\par 
Precisely, \emph{modified scattering} for $d$ dimensional critical NLS equation with nonlinearities
	\[
	g(u)=\sigma |u|^{p-1}u, \quad p=1+\frac2d, \quad d=1,2,3;
	\]
and the Hartree equation with Coulomb potential	
	\[
	g(u)=\sigma \left(|x|^{-1}*|u|^2\right)u, \quad d\ge2,
	\]
and small initial condition, was first proved by Ozawa \cite{Ozawa} and by Ginibre and Ozawa \cite{GO}. Moreover, it was shown that solutions $u$ of such equations present the decay
	\begin{equation}\label{0}
	 \|u(t)\|_{L^{\infty}} \lesssim (1+|t|)^{-d/2} ,
	\end{equation}
when the initial data is sufficiently small in weighted Sobolev spaces (see also Hayashi-Naumkin \cite{HN}, and Kato-Pusateri \cite{KP}, for instance). 
Through a thorough analysis of the solution profile, a simplified proof of scattering in the critical defocusing NLS and Hartree equations has been exhibited in \cite{KP}. 

\medskip

\par Similar recent results hold for the NLS case \emph{with a potential}, as was shown by Cuccagna, Visciglia and Georgiev \cite{CVG} for $p>3$, and Naumkin \cite{Naumkin} and Germain-Pusateri-Rousset \cite{GPR} for the critical case $p=3$ (see also \cite{GPR0}). Indeed, assuming that the potential $V$ is such that $-\frac 12 \Delta +V$ does not have negative eigenvalues nor resonances at zero, they were able to prove the decay \eqref{0} for solutions of subcritical ($p>3$) and critical ($p=3$) NLS equation in one dimension. However different the methods to prove this decay are from each other, it is not clear to us if they still hold by assuming less restrictive spectral conditions (for instance, the absence of resonances).

\medskip

Finally, following idea introduced in \cite{KMM1}, about considering odd data only, Delort \cite{Delort} proved modified scattering for small (smaller than a parameter $\epsilon$) odd solutions $u$ to \eqref{0} with data in $H^{0,1}\cap H^N$, $N$ large, and showed (among other things) the precise decomposition for large time
	\[
	u(t,x) = \frac{\epsilon}{\sqrt{t}}\textit{A}_{\epsilon} \left( \frac{x}{t} \right) \exp\left[	-i\frac{x^2}{2t}+i\epsilon^2 \log t \left|\textit{A}_{\epsilon}\left(\frac{x}{t}\right)\right|^2\right] +r(t,x),		
	\]
where the continuous function $\textit{A}_{\epsilon}$ is bounded in $L^2(\R) \cap L^{\infty}(\R)$, $\theta \in (0, \frac{1}{4})$
and 
	\[
	\|r(t,\cdot )\|_{L^{\infty}}=O(\epsilon t^{-\frac{3}{4}+\theta}), \quad \|\textit{A}_{\epsilon}(x)\langle tx \rangle^{-2}\|_{L^{\infty}}=O(\epsilon t^{-\frac{1}{4}+\theta}),	\]
	and
	\[
	\|r(t,\cdot )\|_{L^2}=O(\epsilon t^{-\frac{1}{4}+\theta}), \quad \|\textit{A}_{\epsilon}(x)\langle tx \rangle^{-2}\|_{L^{2}}=O(\epsilon t^{-\frac{5}{8}+\frac\theta2}).	\]
\par 
Notice that all positive decay/scattering results above mentioned cannot deal with the one dimensional NLS (for $p<3$) and Hartree equations. This is in part explained by the lack of precise nonlinear estimates in the case of long range nonlinearities.

\medskip

Our main goal in this paper is to extend in some sense the recently mentioned results \cite{Naumkin,GPR,KP,Delort} and show decay of small solutions to the above equations, regardless the (supercritical with respect to scattering) power of the nonlinearity. In particular, we consider nonlinearities NLS with $1<p<5$ and Hartree long range supercritical in one dimension. 

\medskip


Our first result covers the NLS case without potential ($1<p<5$).
 
\begin{theorem}\label{T1}
	Suppose $u(t) \in H^1(\mathbb{R})$ is a global odd solution of the equation \eqref{equation}-\eqref{nls} and $\mu=0$ such that, for some $\varepsilon>0$ small, 
		\begin{equation}\label{condition_s}
		\|u(t=0)\|_{H^1(\mathbb{R})}\le \varepsilon.
		\end{equation}
	Then, 
		\begin{equation}\label{result_s1}
		\lim_{t \to \infty } \Big( \|u(t)\|_{L^{2}(I)} + \|u(t)\|_{L^{\infty}(I)} \Big)=0,
		\end{equation}
	for any bounded interval $I \subset \R$. Moreover, if the equation is defocusing, the smallness condition \eqref{condition_s} is not needed.
\end{theorem}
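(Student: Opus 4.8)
The plan is to deduce the local decay \eqref{result_s1} from three ingredients: a uniform $H^1$ bound coming from the conservation laws, a localized \emph{momentum virial} identity whose error terms are tamed using oddness, and a Barbalat-type argument that turns an averaged-in-time bound into a genuine limit.

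First I would record the uniform bound $\sup_{t}\|u(t)\|_{H^1(\R)}\lesssim\varepsilon$. In the general case this follows from \eqref{condition_s}, conservation of mass \eqref{mass} and energy \eqref{energy}, and the Gagliardo--Nirenberg inequality (legitimate since $1<p<5$ is $L^2$-subcritical), after a short continuity argument. In the defocusing case $f(s)=s^{(p-1)/2}$ one has $F\ge0$, so \eqref{energy} directly controls $\|u_x\|_{L^2}$ while \eqref{mass} controls $\|u\|_{L^2}$; the resulting bound $\|u(t)\|_{H^1}^2\le M(u(0))+2E(u(0))$ needs no smallness, which is exactly why \eqref{condition_s} can be dropped there.

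Next, fix a bounded, odd, nondecreasing weight $\phi$ with $\phi'\ge0$ concentrated near the origin, say $\phi(x)=\tanh(x/A)$, and set
\[ \mathcal I(t)=\im\intR \phi(x)\,\bar u\,u_x\,dx. \]
Because $\phi$ is bounded, $|\mathcal I(t)|\le\|\phi\|_{L^\infty}\|u\|_{L^2}\|u_x\|_{L^2}\lesssim\varepsilon^2$ uniformly in $t$. Using \eqref{equation}--\eqref{nls} with $\mu=0$ (the real nonlinearity drops from the mass current $\partial_t|u|^2=-2\partial_x\im(\bar u u_x)$) and integrating by parts, a direct computation yields
\[ \deri\mathcal I(t)=2\intR\phi'|u_x|^2\,dx-\tfrac12\intR\phi'''|u|^2\,dx+\intR\phi'\big(|u|^2 f(|u|^2)-F(|u|^2)\big)\,dx. \]
The first term is coercive; the other two are errors. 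Here the oddness of $u$ enters decisively: since $u(t,0)=0$, Cauchy--Schwarz gives $|u(t,x)|^2\le|x|\int_0^{|x|}|u_y|^2\,dy$, whence the localized Hardy inequalities $\intR\phi'|u|^2\lesssim A^2\intR\phi'|u_x|^2$ and $\big|\intR\phi'''|u|^2\big|\le C_0\intR\phi'|u_x|^2$ with $C_0$ independent of $A$. The nonlinear error obeys $\big||u|^2f-F\big|\lesssim|u|^{p+1}$, so by the uniform bound and Gagliardo--Nirenberg it is $\lesssim\varepsilon^{p-1}\intR\phi'|u|^2\lesssim\varepsilon^{p-1}A^2\intR\phi'|u_x|^2$, which is absorbed for $\varepsilon$ small; in the defocusing case it equals $\frac{p-1}{p+1}\intR\phi'|u|^{p+1}\ge0$ and no smallness is required. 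With a careful choice of constants keeping a strictly positive coercive remainder, one gets $\deri\mathcal I(t)\ge c\intR\phi'|u_x|^2\,dx$; integrating in time and using $|\mathcal I|\lesssim\varepsilon^2$ then gives
\[ \int_0^{\infty}\intR\phi'|u_x|^2\,dx\,dt<\infty. \]

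Finally I would pass to the pointwise limit. Oddness once more gives, for $I=[-R,R]$, the bound $\|u(t)\|_{L^2(I)}^2\le R^2\|u_x(t)\|_{L^2(I)}^2\lesssim\intR\phi'|u_x|^2$, so $\int_0^{\infty}\|u(t)\|_{L^2(I)}^2\,dt<\infty$. The function $t\mapsto\|u(t)\|_{L^2(I)}^2$ has derivative $2\intR\chi'\im(\bar u u_x)\,dx$ for a smooth cutoff $\chi$ of $I$, bounded by $\varepsilon^2$, hence it is uniformly continuous, and a Barbalat-type lemma forces $\|u(t)\|_{L^2(I)}\to0$. The one-dimensional Gagliardo--Nirenberg inequality on $I$ together with the uniform bound on $\|u_x(t)\|_{L^2(I)}$ upgrades this to $\|u(t)\|_{L^\infty(I)}\to0$, which is \eqref{result_s1}. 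The hard part will be the virial step: the $\phi'''$ error is of exactly the same size as the coercive term, so closing the estimate hinges on the oddness-based Hardy inequality, on a judicious (possibly two-scale) choice of $\phi$, and---outside the defocusing case---on smallness to absorb the long-range nonlinear term.
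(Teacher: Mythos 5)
Your overall architecture (momentum virial with $\varphi=\tanh(x/A)$, time-integration against the uniform bound $|\mathcal I|\lesssim\varepsilon^2$, then a Barbalat-type argument for the $L^2$ limit and Gagliardo--Nirenberg for the $L^\infty$ limit) matches the paper's, and those outer steps are fine. The gap is exactly where you flag "the hard part": your treatment of the $\varphi_{xxx}$ term does not close, and the tool you propose for it is not proved by the argument you give. Concretely, the localized Hardy inequality $\int\varphi_x|u|^2\,dx\lesssim A^2\int\varphi_x|u_x|^2\,dx$ (constant independent of $A$) does \emph{not} follow from the Cauchy--Schwarz bound $|u(x)|^2\le |x|\int_0^{|x|}|u_y|^2dy$: swapping the order of integration produces the weight $\int_y^\infty x\varphi_x(x)\,dx\sim A\left(1+2|y|/A\right)e^{-2|y|/A}$ on $|u_y|^2$, which exceeds $A^2\varphi_x(y)\sim 4Ae^{-2|y|/A}$ by the unbounded factor $1+2|y|/A$; so this route only yields a strictly weaker inequality. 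The weighted Hardy inequality you want is in fact true for odd functions, but with sharp constant exactly $A^2$ (near-extremizer $u\approx\sinh(x/A)$), and its proof is equivalent---via the substitution $v=\sech(x/A)\,u$---to the nonnegativity of $-\partial_x^2-\frac{2}{A^2}\sech^2(x/A)$ on odd functions, i.e.\ precisely the spectral input you were trying to avoid. Worse, even granting the inequality, since $|\varphi_{xxx}|\le\frac{4}{A^2}\varphi_x$ pointwise, the absorption gives $\frac12\left|\int\varphi_{xxx}|u|^2\right|\le 2\int\varphi_x|u_x|^2$, which \emph{exactly} cancels the coercive term $2\int\varphi_x|u_x|^2$: you are left with $B\ge 0$ and no positive remainder with which to absorb the focusing nonlinearity, nor anything to integrate in time. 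Your appeal to "a judicious (possibly two-scale) choice of $\varphi$" is precisely the missing step, not a detail.

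The paper closes this step differently, and this is the key idea you need: substitute $v=\sqrt{\varphi_x}\,w=\sech(x/\lambda)\,w$ directly in the bilinear form, which recombines the $\varphi_x$ and $\varphi_{xxx}$ terms \emph{exactly} (no pointwise estimates, no loss) into
\begin{equation*}
B(w)=2\intR v_x^2\,dx-\frac{1}{\lambda^2}\intR \sech^2\left(\frac{x}{\lambda}\right)v^2\,dx ,
\end{equation*}
and then uses the spectral fact that the P\"oschl--Teller operator $-\frac{d^2}{dx^2}-\frac{2}{\lambda^2}\sech^2\left(\frac{x}{\lambda}\right)$ has a single bound state, which is even; hence for odd $v$ one gets the strict coercivity $B(w)\ge\frac32\intR v_x^2\,dx$ (Proposition \ref{coercividad1}), which Lemma \ref{estimation2} then upgrades to $\|u\|_{H^1_\alpha}^2\lesssim B(u_1)+B(u_2)$. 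With that strict margin the nonlinear term is absorbed (by smallness, or by sign in the defocusing case, as you correctly observe), and the rest of your plan goes through. So: correct skeleton, but the central coercivity lemma is missing, and your proposed elementary substitute for it is both quantitatively insufficient and incorrectly derived.
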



\begin{remark}
	NLS \eqref{equation} preserves the oddness of the initial data along the flow.
\end{remark}

\begin{remark}
	Theorem \ref{T1} is sharp. Indeed, it is not true for $u(t)\in H^1$ even. A simple counterexample in this case is the nondecaying soliton itself:
		\begin{equation}\label{Solitary_wave}
		u(t,x)= Q_c(x)e^{ict}, \qquad 0<c\ll 1, 
		\end{equation}
	and $Q_c>0$ solving $Q_c''-cQ_c +Q_c^p=0$, $Q_c\in H^1$. Note that this solution is even in space and small in $H^1$ provided $c\ll1$. Also, the Satsuma-Yajima breather solutions (see \cite{SY} and \cite[eqn. (1.16)]{AFM}) are arbitrarily small nondecaying even solutions to NLS \eqref{nls} in the integrable \cite{ZS} case $p=3$.
\end{remark}

\begin{remark} 
	For an interval $I=I(t)$ growing in time, Theorem \ref{T1} is also sharp. Indeed, see the works \cite{MManihp,Vinh} for the construction of odd solutions composed of two solitary waves with non zero speeds for finite time. These asymptotic 2-soliton solutions can be arbitrarily small in the energy space, but they separate each other as time evolves, leaving any compact region in space for sufficiently large time. In this sense, these solutions do not contradict Theorem \ref{T1}. 
\end{remark}

\begin{remark}
	From the identity
		\[
		\frac{d}{dt} \intR x |u(t,x)|^2dx =-2\, \im \intR u(t)\overline{u}_x(t)dx= -2P(u(t)),
		\]
	valid if $xu(t=0)\in L^2$, we can see that nontrivial, nondecaying periodic-in-time solutions (i.e. breathers) of NLS may exist only if their momentum vanishes. See \cite{MP} for more details on these properties of breather solutions.
\end{remark}

\begin{remark}
	Sometimes, instead of assuming odd data, the additional assumption $\| xu(t=0)\|_{L^2}\ll 1$ is considered. This condition works with even data, and rules out the existence of small solitary waves as in  \eqref{Solitary_wave}, since small solitary waves satisfy $\| xQ_c\|_{L^2}\gg 1$. 
\end{remark}

\begin{remark}
Note that \eqref{result_s1} does not contain the $\dot H^1$ norm of the solution. This is  a standard open issue in the field, see e.g. \cite{Delort} for similar results. In our case, the lack of control on the decay of this semi-norm is due to the emergency of uncontroled $H^2$ terms in the dynamics of the energy norm. 
\end{remark}

\bigskip

The proof of Theorem \ref{T1} is based on the introduction of a virial identity adapted to the NLS dynamics. Following the ideas presented in \cite{KMM2,KMM1}, which considered the nonlinear Klein-Gordon case, we use here a functional adapted to the momentum \eqref{momentum}. Once this virial identity is established, decay is proved in a standard form. 

\medskip

Using inverse scattering techniques, Deift and Zhou \cite{DZ} described the asymptotic behavior of solutions of the defocusing, nearly integrable quintic perturbation of cubic NLS
	\begin{equation}\label{CQ}
	iu_t+u_{xx}=|u|^2 u +\epsilon |u|^4u, \quad \epsilon>0.
	\end{equation}
The case $\epsilon<0$ was left open in \cite{DZ}. Using the techniques of this paper, we are able to give a partial answer for this remaining case:

\begin{corollary}
	Let $\epsilon\neq 0$, and let $u\in C(\R;H^1(\R))$ be a global small odd solution of \eqref{CQ}. Then  \eqref{result_s1} is satisfied.
\end{corollary}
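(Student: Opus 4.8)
The plan is to view the quintic-perturbed cubic NLS \eqref{CQ} as a special instance of the semilinear framework covered by Theorem \ref{T1}, and then simply invoke that theorem. Concretely, equation \eqref{CQ} is \eqref{equation}--\eqref{nls} with $\mu=0$ and nonlinearity $g(u)=f(|u|^2)u$ where
\[
f(s)=s+\epsilon\, s^2.
\]
So the first step is to check that this $f$ satisfies the hypotheses \eqref{hipotesisf} and the local Lipschitz requirement on $f\circ s^2$ demanded in the setup of Theorem \ref{T1}. The two summands correspond to the cubic term $p=3$ and the quintic term $p=5$. Since $p=5$ is the endpoint of the admissible range $1<p<5$, the bound \eqref{hipotesisf} does not literally apply to the full nonlinearity on all of $\R$; however, because we work with \emph{small} solutions, the relevant values of $s=|u|^2$ are bounded, and on any bounded set one has $|f(s)|=|s+\epsilon s^2|\lesssim s$, which is exactly \eqref{hipotesisf} with $p=3$. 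Thus the decay mechanism of Theorem \ref{T1} applies verbatim to small data.

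The second step is to confirm global existence in $C(\R;H^1)$, which is part of the hypothesis of the corollary; for small odd data this follows from the conservation of mass and energy together with the defocusing structure of the leading cubic term, exactly as in the proof that the virial estimate of Theorem \ref{T1} controls the flow. The third and decisive step is to note that the proof of Theorem \ref{T1} is driven by a virial identity adapted to the momentum \eqref{momentum}, and that the sign of $\epsilon$ plays no role in that argument: the virial computation only needs the qualitative bound $|f(s)|\lesssim s$ on the range of values attained, not the precise form or sign of the higher-order correction. This is precisely why the case $\epsilon<0$ left open by Deift--Zhou \cite{DZ} becomes accessible here, since \eqref{result_s1} holds for either sign once the solution is small and odd.

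The main obstacle, and the only genuine subtlety, is the smallness restriction. For $\epsilon>0$ the pure defocusing structure would allow one to drop smallness (as in the last sentence of Theorem \ref{T1}), but for $\epsilon<0$ the quintic term is focusing and energy alone no longer yields an a priori $H^1$ bound for arbitrary data; indeed $p=5$ is the mass-critical/blowup threshold, so global existence for large focusing-quintic data cannot be expected. This is exactly why the corollary is stated only for \emph{small} odd solutions and why it gives only a \emph{partial} answer to the open case of \cite{DZ}: we assume a priori that $u\in C(\R;H^1)$ is global and small, and under those hypotheses the virial-based decay of Theorem \ref{T1} carries through unchanged. I therefore expect the proof of the corollary to be a short verification of hypotheses followed by a direct application of Theorem \ref{T1}, with the one-line caveat about smallness being the substantive point.
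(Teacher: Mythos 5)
Your proposal is correct and is essentially the paper's own argument: the paper proves this corollary in one line, stating that it follows immediately from Theorem \ref{T1}, which is exactly the application you carry out. Your additional verifications (that $f(s)=s+\epsilon s^2$ satisfies the needed bound on the bounded range of $s=|u|^2$ attained by small $H^1$ solutions, that the sign of $\epsilon$ is irrelevant to the virial argument, and that smallness is what makes the $\epsilon<0$ case only a partial answer to \cite{DZ}) are accurate elaborations of why that one-line deduction is legitimate.
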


The proof of this result immediately follows from Theorem \ref{T1}. 

\medskip

Our second result deals with NLS \eqref{equation} with nonzero potential in \eqref{nls}. In this case, we also provide time decay results in the case $\mu V$ small and spatially decaying fast enough, complementing \cite{CVG,Delort,Naumkin,GPR}.

\begin{theorem}[NLS with potential]\label{T1a}
	Assume $V\neq 0$ even as in \eqref{nls}. Under the assumptions of Theorem \ref{T1}, suppose additionally that $V$ satisfies
		\begin{equation}\label{hipotesisV}
		\intR \left( |V(x)| +|V'(x)| \right) \cosh(2x) dx <+\infty.
		\end{equation}
	Then there exists $\mu_0>0$ such that for all $\mu\in (0,\mu_0)$,  \eqref{result_s1} holds 
	for any bounded interval $I \subset \R$.
\end{theorem}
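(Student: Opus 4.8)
The plan is to run the proof of Theorem \ref{T1} essentially verbatim and to treat the potential $\mu V u$ as a perturbation kept on the nonlinear side of the equation; this is exactly what allows us to avoid any spectral hypothesis on $-\partial_x^2+\mu V$. First I would note that oddness is preserved: since $V$ is even and $u$ is odd, $\mu V u$ is odd, so the flow \eqref{equation}-\eqref{nls} still maps odd data to odd data, and the conservation laws together with smallness of the data (and of $\mu$) keep $\|u(t)\|_{H^1}$ uniformly bounded, the extra energy term $\tfrac{\mu}{2}\intR V|u|^2\,dx$ being harmless since it is bounded by $\|V\|_{L^\infty}M(u)$. I would then reuse the same momentum-type virial $\mathcal I(t)=\im\intR\phi\,\overline u\,u_x\,dx$ from Theorem \ref{T1}, with $\phi=\tanh$ (so $\phi'=\sech^2$), which is uniformly bounded, $|\mathcal I(t)|\lesssim\|u(t)\|_{H^1}^2$, and recompute $\tfrac{d}{dt}\mathcal I(t)$ along the new flow.

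Computing $\tfrac{d}{dt}\mathcal I(t)$ reproduces every term of the $\mu=0$ case --- in particular the coercive term $2\intR\sech^2(x)|u_x|^2\,dx$ and the controllable lower-order and nonlinear contributions --- plus exactly one new term coming from $\mu V u$, namely $-\mu\intR\phi\,V'|u|^2\,dx$. The heart of the matter is to bound this against the coercive term, and here oddness enters a second time through a weighted Hardy inequality: writing $u(x)=\int_0^x u_y\,dy$ and applying Cauchy--Schwarz with the splitting $|u_y|=\sech(y)|u_y|\cdot\cosh(y)$ gives $|u(x)|^2\lesssim\cosh(2x)\intR\sech^2(y)|u_y|^2\,dy$, because $\int_0^x\cosh^2(y)\,dy\sim\tfrac14\cosh(2x)$. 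Consequently $\mu\big|\intR\phi V'|u|^2\,dx\big|\lesssim\mu\big(\intR|V'|\cosh(2x)\,dx\big)\intR\sech^2|u_x|^2\,dx$, and the analogous contribution carrying $|V|$ is treated identically. This is precisely where hypothesis \eqref{hipotesisV} is used: the factor $\cosh(2x)$ --- with the exponent $2$ produced by $\cosh^2(y)\sim\tfrac12\cosh(2y)$ --- is exactly what renders the constant finite, and then for $\mu<\mu_0$ small the whole potential term is at most half the coercive term and is absorbed, leaving the coercivity intact.

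The rest proceeds as in Theorem \ref{T1}: the resulting differential inequality together with uniform boundedness of $\mathcal I(t)$ yields $\int_0^\infty\intR\sech^2|u_x|^2\,dt<\infty$, and oddness (a Poincar\'e/Hardy bound around $x=0$, where $u(0)=0$) upgrades this local control of $u_x$ into decay of both $\|u(t)\|_{L^2(I)}$ and $\|u(t)\|_{L^\infty(I)}$, giving \eqref{result_s1}. The main obstacle, and the only genuinely new point, is the absorption step: establishing the weighted Hardy bound and matching the exponential localization of $V$ in \eqref{hipotesisV} to the exponential weight $\sech^2$ of the coercive term. This matching is what replaces the customary spectral assumptions (no negative eigenvalues, no zero-resonance), at the cost of requiring $V$ exponentially localized and $\mu$ below a threshold $\mu_0$; a slower decay of $V$, or a large coupling, would let the potential term overpower the quadratic form and break the argument.
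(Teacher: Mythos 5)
Your proposal takes a genuinely different route from the paper at the only step where the potential matters. The paper keeps the new virial term $-\mu\intR \varphi V_x|u|^2\,dx$ \emph{inside} the quadratic form, splits $\mathcal B=\mathcal L+\mathcal K$ with $\mathcal K(v)=\intR v_x^2+\mu\intR V_0v^2$, $V_0=-V_x\varphi/\varphi_x$, and proves $\mathcal K\ge 0$ on odd functions via a spectral theorem of Simon on weakly coupled one-dimensional Schr\"odinger operators: for $\mu$ small, $-\frac{d^2}{dx^2}+\mu V_0$ has at most one negative eigenvalue, whose eigenfunction is even because $V_0$ is even, hence orthogonal to odd $v$. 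There, \eqref{hipotesisV} is what makes $\intR(1+x^2)|V_0|\,dx$ and $\intR V_0\,dx=\intR\cosh(2x/\lambda)V\,dx$ finite (note $\varphi/\varphi_x=\tfrac{\lambda}{2}\sinh(2x/\lambda)$). You instead put the potential term on the error side and absorb it through a weighted Hardy inequality using only $u(0)=0$. This is more elementary (no citation of Simon/Klaus needed), it explains transparently where the $\cosh(2x)$ weight comes from, it only uses the $|V'|$ half of \eqref{hipotesisV}, and both routes produce a smallness threshold $\mu_0$; so as a strategy it is sound and a legitimate alternative.

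However, the absorption step as you wrote it does not close, because of a weight mismatch. With $\varphi=\tanh x$, the raw term $2\intR\sech^2(x)|u_x|^2\,dx$ is \emph{not} available as a lower bound for $-\frac{d}{dt}I$: it is entirely consumed in neutralizing the indefinite term $-\frac12\intR\varphi_{xxx}|u|^2\,dx$ (Proposition \ref{coercividad1} needs the full $2\intR v_x^2\,dx$ with $v=\alpha u_i$, $\alpha=\sqrt{\varphi_x}$). What actually survives, via the manipulations of Lemma \ref{estimation2}, is $\intR\sech^4(x/\lambda)\bigl(|u_x|^2+|u|^2\bigr)dx\lesssim B(u_1)+B(u_2)$. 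With your choice $\lambda=1$ this controls only $\intR\sech^4(x)|u_x|^2\,dx$, and there is no constant $C$ with $\intR\sech^2(x)w_x^2\,dx\le C\,B(w)$ for all odd $w\in H^1$: taking $w=\cosh(x)\,v$ with $v$ an odd bump of width $R$ gives $\intR\sech^2 w_x^2\sim R$ while $B(w)\sim\intR v_x^2\sim 1/R$. Running your Hardy inequality with the available $\sech^4$ weight would instead force $\intR|V'|\cosh(4x)\,dx<\infty$, which \eqref{hipotesisV} does not give. The repair is exactly the paper's parameter choice: take $\varphi=\lambda\tanh(x/\lambda)$ with $\lambda$ large (the paper uses $\lambda=100$), so that $\sech^4(x/\lambda)\gtrsim\sech(x)\ge\sech^2(x)$; then your chain $\mu\bigl|\intR\varphi V'|u|^2\bigr|\lesssim\mu\bigl(\intR|V'|\cosh(2x)\bigr)\intR\sech^2(y)|u_y|^2\,dy\lesssim \mu\, C_V\bigl(B(u_1)+B(u_2)\bigr)$ closes, absorption for $\mu<\mu_0$ goes through, and the remaining steps (time integration of the virial, the localized $L^2$ identity, and the fundamental-theorem-of-calculus upgrade to $L^\infty$) are indeed identical to Theorem \ref{T1}.
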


\begin{remark}
	Note that Theorem \ref{T1a} does not require that the operator $-\partial_x^2 \pm \mu V$ satisfies specific spectral properties as in \cite{CVG,Naumkin,GPR}; only the decay hypothesis \eqref{hipotesisV} is needed. In particular, no \emph{nonresonance} condition is needed for having \eqref{result_s1}. This fact reveals that the non resonance condition is essentially linked to the evenness of the involved data.
\end{remark}


\begin{remark}
We can ask for $V$ decaying slower than in \eqref{hipotesisV}, but proofs are probably more complicated; we hope to consider this problem elsewhere.
\end{remark}

\medskip

Finally, we deal with the Hartree case.

\begin{theorem}[Defocusing Hartree equation]\label{TH1}
	Suppose that $u \in H^1(\mathbb{R})$ is a global odd solution of equation \eqref{equation} with \eqref{nlh1} and $\sigma=1$.
	Then
		\begin{equation}\label{result_h}
		\lim_{t \to \infty }(\|u(t)\|_{L^{2}(I)}+ \|u(t)\|_{L^{\infty}(I)})=0,
		\end{equation}
	for any bounded interval $I \subset \R$.
\end{theorem}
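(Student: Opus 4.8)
The plan is to adapt the momentum-based virial method of Theorem~\ref{T1} to the nonlocal Hartree setting, exploiting both the defocusing sign $\sigma=1$ and the oddness of $u$. First, since $\sigma=1$ and $W\ge0$, the potential energy $G(u)=\frac14\intR(W*|u|^2)|u|^2\,dx$ is nonnegative, so energy conservation \eqref{energy} controls $\|u_x(t)\|_{L^2}$ while mass conservation \eqref{mass} controls $\|u(t)\|_{L^2}$. This yields a uniform bound $\sup_{t}\|u(t)\|_{H^1}\lesssim1$, which makes the solution global and supplies the compactness used at the end; in particular no smallness assumption is needed, precisely because the defocusing nonlinearity is coercive.

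Following \cite{KMM2,KMM1}, I introduce a virial functional adapted to the momentum \eqref{momentum},
\[
\mathcal{I}(t)=\im\intR \varphi(x)\,u(t)\,\overline{u}_x(t)\,dx,
\]
where $\varphi$ is a smooth, bounded, odd weight with $\varphi'\ge0$ (e.g.\ $\varphi=\tanh$, so that $\varphi'=\sech^2>0$, bounded below on any bounded interval). Boundedness of $\varphi$ and the $H^1$ bound give $\sup_t|\mathcal{I}(t)|\lesssim1$. Differentiating and substituting $iu_t=g(u)-u_{xx}$ yields a virial identity of the schematic form
\[
\frac{d}{dt}\mathcal{I}(t)=\intR \varphi'(x)\,|u_x|^2\,dx+(\text{nonlinear terms})+(\text{lower order}),
\]
where the oddness condition $u(0,t)=0$ removes the boundary contribution at the origin that would otherwise spoil the sign of the kinetic density. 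This is exactly the structural role played by oddness in lieu of a spectral nonresonance hypothesis.

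The crux is the nonlinear contribution from $g(u)=(W*|u|^2)u$. After integration by parts it splits into a diagonal term $\intR\varphi'(x)(W*|u|^2)|u|^2\,dx$ and a cross term built from the derivative of the convolution. Using the evenness of $W$ I symmetrize the diagonal term,
\[
\intR\varphi'(x)(W*|u|^2)|u|^2\,dx=\tfrac12\iint\big(\varphi'(x)+\varphi'(y)\big)W(x-y)|u(x)|^2|u(y)|^2\,dx\,dy\ge0,
\]
which is nonnegative since $W\ge0$ and $\varphi'\ge0$; here $\sigma=1$ is decisive. The delicate part is the cross term, involving the more singular kernel $W'(x-y)=-a\,\mathrm{sgn}(x-y)|x-y|^{-a-1}$. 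I expect this to be the main obstacle: one must show, via the Hardy--Littlewood--Sobolev inequality \cite[Theorem 4.3]{LiebLoss} already used for well-posedness, together with the uniform $H^1$ bound and $a\in(0,1)$, that this singular term is dominated by the good kinetic and diagonal terms and cannot destroy the sign of $\intR\varphi'|u_x|^2$.

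Granting the virial identity, I integrate over $[0,T]$ and use $\sup_t|\mathcal{I}(t)|\lesssim1$ to obtain
\[
\int_0^{\infty}\intR\varphi'(x)\big(|u_x|^2+|u|^2\big)\,dx\,dt<\infty,
\]
so in particular $\int_0^\infty\|u(t)\|_{L^2(I)}^2\,dt<\infty$ for every bounded $I$. To upgrade this to the pointwise decay \eqref{result_h}, I note that $t\mapsto\|u(t)\|_{L^2(I)}^2$ has bounded derivative (estimated from the equation and the $H^1$ bound), hence is uniformly continuous; an integrable, uniformly continuous nonnegative function tends to $0$, giving the $L^2(I)$ decay. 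The $L^\infty(I)$ decay then follows by the one-dimensional interpolation $\|u\|_{L^\infty(I)}\lesssim\|u\|_{L^2(I)}^{1/2}\|u\|_{H^1}^{1/2}+\|u\|_{L^2(I)}$ and the uniform $H^1$ bound.
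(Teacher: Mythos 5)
Your proposal follows the paper's general scheme (momentum virial $\im\intR\varphi\, u\overline{u}_x\,dx$ with a bounded nondecreasing weight, time integration, then an $L^2$-to-$L^\infty$ upgrade — your Barbalat-style Step 1 and interpolation Step 2 are fine), but it has a genuine gap precisely at the point you defer. The splitting of the nonlinear contribution is not what the computation gives: the momentum-density part produces $-\intR\varphi_x(W*|u|^2)|u|^2\,dx$, while integrating by parts in $-\intR\varphi\,(W*|u|^2)(|u|^2)_x\,dx$ produces $+\intR\varphi_x(W*|u|^2)|u|^2\,dx$ plus the convolution-derivative term, so the two ``diagonal'' terms cancel identically and the \emph{entire} nonlinear contribution is the single cross term $\sigma\intR\varphi\,(W'*|u|^2)|u|^2\,dx$ with $W'(z)=-a\,z|z|^{-a-2}$. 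Thus the term you symmetrize (invoking $W\ge0$) is absent, and the term you set aside as ``the main obstacle'' is the whole nonlinearity. Moreover the tool you propose for it cannot work: Hardy--Littlewood--Sobolev in dimension one requires a kernel $|z|^{-\lambda}$ with $0<\lambda<1$, whereas here $\lambda=a+1>1$; and since Theorem \ref{TH1} has no smallness assumption, there is nothing against which to absorb such a term even if it could be estimated. The paper's resolution is structural, not perturbative: because $K(z)=z|z|^{-a-2}$ is odd, swapping $x\leftrightarrow y$ gives
\begin{equation*}
\iint\varphi(x)K(x-y)|u(x)|^2|u(y)|^2\,dx\,dy=\frac12\iint\bigl(\varphi(x)-\varphi(y)\bigr)K(x-y)|u(x)|^2|u(y)|^2\,dx\,dy\ \ge\ 0,
\end{equation*}
since $(\varphi(x)-\varphi(y))(x-y)\ge0$ for nondecreasing $\varphi$. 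With $\sigma=1$ this term enters $-\frac{d}{dt}J$ with a favorable sign and is simply discarded; note the difference $\varphi(x)-\varphi(y)$ also tames the singularity, which is why no HLS-type bound is ever needed.

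The second gap is your account of oddness. There is no ``boundary contribution at the origin'' to remove — the virial identity \eqref{virialh} holds for any $H^1$ solution. What oddness buys is coercivity of the quadratic part: the identity contains $-\frac12\intR\varphi_{xxx}|u|^2\,dx$, which has the \emph{wrong} sign and which your schematic identity buries in ``lower order'' without ever estimating it. Writing $B(w)=2\intR\varphi_x w_x^2\,dx-\frac12\intR\varphi_{xxx}w^2\,dx$ and substituting $v=\sqrt{\varphi_x}\,w$ turns it into $2\intR v_x^2\,dx-\frac{1}{\lambda^2}\intR\sech^2(x/\lambda)v^2\,dx$, whose nonnegativity (indeed $\ge\frac32\intR v_x^2\,dx$, Proposition \ref{coercividad1}) holds because the unique negative eigenvalue of $-\partial_x^2-\frac{2}{\lambda^2}\sech^2(x/\lambda)$ has an \emph{even} eigenfunction, hence its form is nonnegative on odd functions. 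This spectral fact, together with Lemma \ref{estimation2}, is what produces the weighted $H^1$ space-time bound — in particular the weighted $L^2$ portion of the estimate $\int_0^\infty\intR\varphi'\bigl(|u_x|^2+|u|^2\bigr)\,dx\,dt<\infty$ that you assert but do not prove. Once these two points (sign of the nonlocal term via antisymmetrization, coercivity of $B$ on odd functions) are supplied, your concluding steps go through as written.
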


\begin{remark}
	Theorem \ref{TH1} proves the non-existence of odd standing waves solutions for the equation \eqref{equation} with defocusing Hartree type non-linearities. 
\end{remark}

\begin{remark}
	Theorem \ref{TH1} does not include the focusing case, which is an open problem of independent interest. In that sense, the scattering problem for the $d\geq 2$ generalized Hartree equation was recently treated in Arora-Roudenko \cite{AR}. 
\end{remark}

\begin{remark}
	Focusing Hartree equation \eqref{equation} with \eqref{nlh1} ($\sigma=-1$) admits solitary waves solutions (or solitons) 
			\[u(t, x)=e^{ict}Q_c(x) \in H^1\]
	where $Q_c:\R \to \R$ is an $H^1$-solution of the Choquard equation
		\begin{equation}\label{choquard}
		 \Delta Q+ \left( \frac{1}{|x|^{a}}*|Q|^p\right)Q - \lambda Q=0, \quad c\in \R.
		 \end{equation}
	These solutions are, up to translation and inversion of the sign, positive and radially symmetric functions \cite{CSS, VanSchaftingen}. Moreover, solitary waves for the focusing Hartree equation are stable, as was proven by Cazenave and Lions in \cite{CL}. See also Ruiz \cite{Ruiz} for more details on solitary waves for Hartree.
\end{remark}


%

\begin{remark}[NLS around solitary waves]
Solitary waves in mass subcritical NLS exist and they are stable. The first results on stability were provided by Cazenave and Lions in \cite{CL}, where orbital stability of solitary waves for the NLS equation \eqref{equation}-\eqref{nls} without potential was proven (see also \cite{Weinstein, GSS}). Stability of several NLS solitons well-decoupled was proved in \cite{MMT2}, and in \cite{Kap} for the integrable case. The asymptotic stability for the same equation was studied by Buslaev and Pere'lman in \cite{BuslaevPerelman} in the supercritical regime; this result was later generalized by Cuccagna in \cite{Cucc1, Cucc2, CM} for dimensions $d\ge 3$, and under special spectral conditions on the linealized operator around the solitary wave. The one dimensional case, under similar spectral assumptions and even data perturbations of the standing wave, was studied by Buslaev and Sulem \cite{BuslaevSulem}. For the NLS equation with potential \eqref{equation}-\eqref{nls}, results for asymptotic stability of gound states (also, under spectral conditions) were provided by Soffer and Weinstein in \cite{SW1, SW2}, see also \cite{SW}. We believe that some of the ideas in this paper can be generalized to the case of asymptotic stability for solitary waves, but with harder proofs. See e.g. the recent paper by Cuccagna and Maeda \cite{CuccagnaMaeda}, and the NLKG paper by Kowalczyk, Martel and Mu\~noz \cite{KMM3}. 
\end{remark}



\subsection*{Notation} To simplify the notation we will denote $u_1= \text{Re } u$, $u_2=\text{Im }u$. Let $\alpha(x)\geq 0$ be a weight. We also denote by 
	\begin{equation}\label{weighted}
	\|u(t)\|_{H^1_{\alpha}(\R)}^2 := \intR \alpha(x) \left(|u_x(t,x)|^2 + |u(t,x)|^2\right)dx,
	\end{equation}
the weighted $H^1$-norm with weight $\alpha$.

\subsection*{Organization of this paper} This paper is written as follows. In Section \ref{withoutpotential} we prove Theorem \ref{T1}, NLS without potential. Section \ref{withpotential} is devoted to the proof of Theorem \ref{T1a}, namely NLS with potential. Finally, Section \ref{proof_Hartree} deals with the Hartree case (Theorem \ref{TH1}).

\bigskip

\section{Schr\"odinger equation without potential}\label{withoutpotential}

In this Section we prove Theorem \ref{T1}. Consider the equation (\ref{equation}) with (\ref{nls}) and $V \equiv 0$. That is, 
	\begin{equation}\label{NLS1}
	 iu_t+u_{xx}= f\left(|u|^2\right)u,\qquad u\in H^1 \text{ odd.}
	\end{equation}
As claimed in the introduction, the proof here follows the ideas in \cite{KMM2}, with some minor differences.

\subsection{A virial identity}\label{VIwithoutpotential}

We shall introduce a standard virial identity adapted to \eqref{NLS1}. Let $\varphi \in C^{\infty}(\mathbb{R})$ be bounded and to be chosen later, $u(t) \in H^1(\R)$ a solution of equation (\ref{NLS1}) and define
	\begin{equation}\label{I}
	I(u(t)):=\im\intR \varphi(x) u(t,x)\overline{u}_x(t,x)dx.
	\end{equation}
Then we have the following:

\begin{lemma} \label{VirialIdentity1}
	For $u\in C(\R;H^1(\R))$ one has $I(u(t))$ well-defined and bounded in time. Moreover, we have the virial identity
		\begin{equation}\label{virial1}
		\begin{aligned}
		-\deri I(t)= 2 \intR \varphi_x |u_x|^2 dx - \frac{1}{2}\intR \varphi_{xxx} |u|^2 dx- \intR \varphi_x \left[F\left(|u|^2\right)-f(|u|^2)|u|^2\right] dx.
		\end{aligned}
		\end{equation}
\end{lemma}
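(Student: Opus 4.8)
The plan is to compute $\deri I(t)$ directly from the equation and then integrate by parts repeatedly so that every derivative falls on the weight $\varphi$. First I would dispose of the easy assertions. Since $\varphi$ is bounded, Cauchy--Schwarz gives $|I(u(t))|\le \|\varphi\|_{L^\infty}\,\|u(t)\|_{L^2}\,\|u_x(t)\|_{L^2}$, so $I$ is well-defined for $u\in H^1$; and because $u$ is a global $H^1$ solution whose mass \eqref{mass} and energy \eqref{energy} are conserved (which, in the subcritical range \eqref{hipotesisf}, control $\|u(t)\|_{H^1}$ via Gagliardo--Nirenberg), $I(u(t))$ stays bounded on $\R$.

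For the identity itself I would set $\mathcal I(t):=\intR \varphi\, u\,\overline u_x\,dx$, so that $I=\im\mathcal I$, and differentiate. Moving $\partial_x$ off $\overline{u_t}$ in the second term by parts, one gets
\[
\deri\mathcal I = \intR \varphi\,(u_t\overline u_x - u_x\overline{u_t})\,dx - \intR \varphi_x\,u\,\overline{u_t}\,dx = 2i\intR \varphi\,\im(u_t\overline u_x)\,dx - \intR \varphi_x\,u\,\overline{u_t}\,dx .
\]
Then I would substitute $u_t = i u_{xx} - i f(|u|^2)u$ from \eqref{NLS1} and take imaginary parts, using $\im(iz)=\re z$ and the pointwise identities $\re(u_{xx}\overline u_x)=\tfrac12\partial_x|u_x|^2$, $\re(u\overline u_x)=\tfrac12\partial_x|u|^2$, and $\re(u\overline u_{xx})=\tfrac12\partial_x^2|u|^2-|u_x|^2$. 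After a few integrations by parts this produces the terms $2\intR\varphi_x|u_x|^2\,dx$, $-\tfrac12\intR\varphi_{xxx}|u|^2\,dx$, together with the nonlinear contributions $\intR\varphi\, f(|u|^2)\,\partial_x|u|^2\,dx + \intR\varphi_x\, f(|u|^2)\,|u|^2\,dx$.

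The key observation for the nonlinear piece is that $F'(s)=f(s)$, so $f(|u|^2)\,\partial_x|u|^2 = \partial_x F(|u|^2)$; integrating $\intR\varphi\,\partial_x F(|u|^2)\,dx = -\intR\varphi_x\,F(|u|^2)\,dx$ by parts and combining with the remaining term gives exactly $-\intR\varphi_x\big[F(|u|^2)-f(|u|^2)|u|^2\big]\,dx$, which is the claimed nonlinear term of \eqref{virial1}. Collecting all contributions and changing sign yields the identity.

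The main obstacle is rigor rather than algebra: the computation uses $u_{xx}$ and several integrations by parts that are only licit for sufficiently smooth and spatially decaying $u$, whereas the hypothesis only provides $u\in C(\R;H^1)$. I would handle this by the standard approximation argument: regularize the data so the corresponding solutions lie in $C(\R;H^2)$ (or smoother), for which every step above is justified and no boundary terms appear at $x\to\pm\infty$ (here the boundedness of $\varphi$ and of its derivatives $\varphi_x,\varphi_{xxx}$ is what kills the boundary contributions), establish \eqref{virial1} for these approximations, and then pass to the limit. The limit is legitimate because both sides of \eqref{virial1} are continuous in the $H^1$ topology once written through $|u_x|^2$, $|u|^2$ and $f(|u|^2)$: the local Lipschitz assumption on $f\circ s^2$ together with the subcriticality \eqref{hipotesisf} guarantees the convergence of the nonlinear term, while continuous dependence from the local well-posedness theory controls the linear terms.
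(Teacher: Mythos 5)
Your proof is correct and follows essentially the same route as the paper's: differentiate $I$, integrate by parts to move the time derivative off $\overline{u}_x$, substitute the equation, and use $F'=f$ to convert $\varphi f(|u|^2)\partial_x|u|^2$ into $\varphi\,\partial_x F(|u|^2)$ before a final integration by parts. The only additions are points the paper leaves implicit — the Cauchy--Schwarz/conservation argument for well-definedness and boundedness of $I$, and the regularization-and-limit step justifying the use of $u_{xx}$ for mere $H^1$ solutions — both of which are sound.
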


\begin{proof} 
	Let $u(t) \in H^1(\R)$ such that it satisfies equation (\ref{NLS1}). Then, we integrate by parts
		\begin{align*}
		\deri I(u(t))
		&= \im \intR \varphi u_t \overline{u}_x dx + \im \intR  \varphi u \overline{u}_{xt} dx\\
		&= \im \intR \varphi u_t \overline{u}_x dx - \im \intR \left(\varphi u \right)_x  \overline{u}_{t} dx.
		\end{align*}
	Then,
		\begin{align*}
		\deri I(u(t))
		&=- \im \intR i \varphi\left( iu_t\right) \overline{u}_x dx - \im \intR i \left(\varphi u\right)_x \overline{i u_t} dx\\
		&=- \re \intR  \varphi\overline{iu_t} u_x dx - \re \intR  \left(\varphi u\right)_x \overline{i u_{t}} dx.
		\end{align*}
	Computing the derivative on the second term above
		\begin{equation}\label{a1}
		\begin{aligned}
		\deri I(u(t))
		&= - 2\re \intR \varphi \overline{i u_t} u_x dx - \re \intR \varphi_x u \overline{i u_t} dx \\
		&= - 2\re \intR \varphi (i u_t) \overline{u}_x dx - \re \intR \varphi_x \left(i u_t\right)\overline{u} dx.
		\end{aligned}
		\end{equation}
	Thus, using (\ref{NLS1}), we get
		\begin{align*}
		\deri I(u(t))= 
		& ~ {}2\re \intR \varphi u_{xx} \overline{u}_x dx + \re \intR \varphi_x  u_{xx} \overline{u} dx \\ 
		& ~ {} - 2\re \intR \varphi f\left(|u|^2\right)u \overline{u}_x dx - \re \intR \varphi_x f\left(|u|^2\right)u\overline{u} dx.
		\end{align*}
	We notice that $2 \re \left(u_x \overline{u}\right)=2 \re\left( u \overline{u}_x\right) = \left(|u|^2\right)_x$, then
		\begin{align*}
		\deri I(u(t))= 
		& ~ {} \intR \varphi \left(|u_x|^2\right)_x dx + \re \intR \varphi_x  u_{xx}\overline{u} dx \\ 
		& ~ {} - \intR \varphi f\left(|u|^2\right)\left(|u|^2\right)_xdx - \intR \varphi_x f\left(|u|^2\right)|u|^2 dx.
		\end{align*}
	Recall the definition of $F(s)=\int_0^s f(v)dv$, which implies that $\left(F(s)\right)_x= f(s)s_x$. Furthermore,
		\begin{align*}
		\deri I(u(t))= 
		& ~ {} \intR \varphi \left(|u_x|^2\right)_x dx + \re \intR \varphi_x \overline{u} u_{xx} dx \\
		& ~ {}-  \intR \varphi \left( F\left(|u|^2\right)\right)_xdx -\intR \varphi_x f\left(|u|^2\right)|u|^2 dx.
		\end{align*}
	Integrating by parts, we obtain 
		\begin{align*}
		\deri I(u(t))= 
		& -2 \intR \varphi_x |u_x|^2 dx - \re \intR \varphi_{xx} \overline{u} u_{x} dx  
		+ \intR \varphi_x \left[F\left(|u|^2\right)-f\left(|u|^2\right)|u|^2\right]dx \\
		=& -2 \intR \varphi_x |u_x|^2 dx - \frac{1}{2}\intR \varphi_{xx} \left(|u|^2\right)_x dx  + \intR \varphi_x \left[F\left(|u|^2\right)-f\left(|u|^2\right)|u|^2\right]dx. 
		\end{align*}
	We integrate by parts again on the second term to obtain
		\begin{align*}
		\deri I(u(t))=& -2 \intR \varphi_x |u_x|^2 dx + \frac{1}{2}\intR \varphi_{xxx} |u|^2 dx  +  \intR \varphi_x \left[F\left(|u|^2\right)-f\left(|u|^2\right)|u|^2\right]dx.
		\end{align*}
\end{proof}

\subsection{Analysis of a bilinear form}\label{bilinearanalisys1} 

With the identity \eqref{virial1} in mind, we define the bilinear form 
	\begin{equation}\label{bilinear}
	B(w)= 2 \intR \varphi_x {w}_x^2 dx - \frac{1}{2}\int _{\mathbb{R}} \varphi_{xxx} w^2 dx, \quad w=u_i, \quad i=1,2.
	\end{equation}
	Here, $u=u_1+iu_2$, with $u_1,u_2$ real-valued.
	
	\medskip
	
Let $\lambda \in (1, \infty)$. As we explained before, our intention is to prove some estimation of $B$ using the weighted $H^1_\alpha$-norm introduced in \eqref{weighted}.
To obtain this, we will consider $\varphi(x)=\lambda \tanh \left(\frac{x}{\lambda}\right)$ on the virial identity (\ref{virial1}) and define the auxiliar function $\alpha(x)=\sqrt{\varphi_x(x)}$. Now, we estimate each term of the bilinear form $B$:
	\begin{align*}
	\intR \left(\alpha w\right)_x^2 dx 
	& = \intR \alpha^2 \left({w}_x\right)^2 dx +2 \intR \alpha \alpha_x w {w}_x dx  + \intR \left(\alpha_x\right)^2 w^2 dx\\
	& = \intR \varphi_x \left({w}_x\right)^2 dx + \intR \alpha \alpha_x \left(w^2\right)_x dx  + \intR \left(\alpha_x\right)^2 w^2 dx\\	
	& = \intR \varphi_x \left({w}_x\right)^2 dx - \intR \alpha \alpha_{xx}w^2 dx, 
	\end{align*}
using integration by parts in the last equality. Thus 
	\begin{align}\label{b11}
	\intR \varphi_x \left({w}_x\right)^2 dx=
	\intR \left(\alpha w\right)_x^2 dx + \intR  \frac{\alpha_{xx}}{\alpha}\left(\alpha w\right)^2 dx.
	\end{align}
Furthermore, noticing that $\varphi_{xxx}=\left(\alpha^2\right)_{xx}=2\left(\alpha \alpha_{xx} + \alpha_x^2\right)$, we get
	\begin{align}\label{b12}
	\intR \varphi_{xxx} w^2 dx = 2\intR\left(\frac{\alpha_{xx}}{\alpha}+\frac{\alpha_x^2}{\alpha^2}\right)\left(\alpha w \right)^2 dx.
	\end{align}
Hence, from (\ref{b11}) y (\ref{b12}), 
	\begin{align*}
	B(w)=2\intR \left(\alpha w\right)^2_x dx - \intR\left( \frac{\alpha_x^2}{\alpha^2}-\frac{\alpha_{xx}}{\alpha}\right)\left(\alpha w \right)^2 dx.
	\end{align*}
Since $\alpha(x)=\sech \left(\frac{x}{\lambda}\right)$, then
	\begin{align*}
	&\alpha_x(x)=-\frac{1}{\lambda} \sech\left(\frac{x}{\lambda}\right)\tanh\left(\frac{x}{\lambda}\right)\\ &\alpha_{xx}(x)=\frac{1}{\lambda^2}\left(\sech\left(\frac{x}{\lambda}\right)\tanh\left(\frac{x}{\lambda}\right)- \sech^3\left(\frac{x}{\lambda}\right)\right)
	\end{align*}
which implies that 
	\begin{align*}
	B(w) = 2\intR \left(\alpha w\right)^2_x dx - \frac{1}{\lambda^2} \intR\sech^2\left(\frac{x}{\lambda}\right)\left(\alpha w \right)^2 dx.
	\end{align*}
In order to prove Theorem \ref{T1} we need to prove that the bilineal part of (\ref{virial1}) is coercitive in some way. To be more precise, we would like the following 
	\begin{equation}\label{b4}
	B(w)\ge \intR \left(\alpha w\right)_x^2dx.
	\end{equation}
We introduce the auxiliar function $v=\alpha w$. Then we can set 
	\[\mathcal{B}(v) = 2\intR {v}^2_x dx - \frac{1}{\lambda^2} \intR\sech^2\left(\frac{x}{\lambda}\right)v^2 dx\]
\noindent so that 
	\[\mathcal{B}(v)=B(w).\]
This way, coercivity of the operator $\mathcal{B}$ implies (\ref{b4}). We recall now

\begin{proposition}[See \cite{KMM2}]\label{coercividad1}
	Let $v \in H^1(\R)$ be odd, $\lambda > 0$. Then 
		\begin{equation}
		\mathcal{B}(v)\ge \frac32 \intR v_x^2 dx.
		\end{equation}
\end{proposition}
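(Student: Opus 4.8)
The plan is to reduce the statement to a parameter-free spectral inequality and then prove it by a ground-state (Allegretto--Piepenbrink) substitution. First I would rescale: setting $y=x/\lambda$ and $\tilde v(y)=v(\lambda y)$ turns the claimed bound $\mathcal B(v)\ge \frac32\intR v_x^2\,dx$ into the equivalent assertion
\begin{equation*}
\intR \tilde v_y^2\,dy \ge 2\intR \sech^2(y)\,\tilde v^2\,dy,
\end{equation*}
and this reduction clearly preserves oddness. Thus it suffices to show that the P\"oschl--Teller operator $L:=-\partial_y^2-2\sech^2 y$ is nonnegative on odd $H^1$ functions. Conceptually $L$ is explicitly solvable: its unique bound state is the \emph{even} function $\sech y$ at energy $-1$, while the essential spectrum is $[0,\infty)$; since the only negative eigenvalue is carried by an even eigenfunction, $L\ge 0$ on its orthogonal complement, which contains every odd function. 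To turn this heuristic into a self-contained argument I would use the zero-energy solution $\phi(y)=\tanh y$, which satisfies $L\phi=0$ and is strictly positive on $(0,\infty)$.

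By oddness $v(0)=0$, and both sides of the inequality are twice their counterparts on $(0,\infty)$, so I work on the half-line. Writing $v=\phi w$, integrating by parts and using $\phi_{yy}=-2\sech^2 y\,\phi$, the cross term is absorbed and I obtain
\begin{equation*}
\int_0^\infty v_y^2\,dy - 2\int_0^\infty \sech^2 y\, v^2\,dy = \int_0^\infty \phi^2 w_y^2\,dy + \Big[\phi\phi_y w^2\Big]_0^\infty .
\end{equation*}
The first term on the right is manifestly nonnegative, so everything reduces to showing that the boundary term vanishes. At $y=+\infty$ one has $\phi_y=\sech^2 y\to 0$ and $v\to 0$, so $\phi\phi_y w^2=\sech^2 y\,v^2/\tanh y\to 0$. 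At $y=0^+$ the factor $w=v/\tanh y$ threatens to blow up because $\phi$ vanishes, and here oddness is essential: from $v(0)=0$ and Cauchy--Schwarz, $v(y)^2\le y\int_0^y v_s^2\,ds$, whence $\phi\phi_y w^2\sim v(y)^2/y\le \int_0^y v_s^2\,ds\to 0$.

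The main obstacle is precisely the rigorous justification near $y=0$, where $\phi$ degenerates and the ground-state substitution $w=v/\phi$ need not be admissible for a generic $H^1$ function. I would therefore first establish the identity above for odd $v\in C^\infty_c(\R)$ (functions vanishing to first order at the origin), for which the substitution and every boundary evaluation are legitimate, and then pass to the limit by density, using that both quadratic forms $\intR v_y^2$ and $\intR\sech^2 y\,v^2$ are continuous on $H^1(\R)$. This yields $\intR \tilde v_y^2\,dy \ge 2\intR \sech^2(y)\,\tilde v^2\,dy$ for all odd $\tilde v\in H^1(\R)$, and undoing the rescaling gives $\mathcal B(v)\ge\frac32\intR v_x^2\,dx$, as claimed.
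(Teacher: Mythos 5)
Your proposal is correct, and although it shares the paper's overall reduction, the decisive step is proved by a genuinely different argument. Both you and the paper peel off $\frac32\intR v_x^2\,dx$ and reduce the claim to the inequality $\intR v_x^2\,dx \ge \frac{2}{\lambda^2}\intR \sech^2\left(\frac{x}{\lambda}\right)v^2\,dx$ for odd $v$; your rescaling $y=x/\lambda$ is just a normalization of this, i.e.\ nonnegativity of the P\"oschl--Teller operator $-\partial_y^2-2\sech^2 y$ on odd functions. The paper proves this by invoking the classical bound-state count for $\sech^2$ wells (citing \cite{Goldman}): the operator $-\frac{d^2}{dx^2}-\frac{2}{\lambda^2}\sech^2\left(\frac{x}{\lambda}\right)$ has exactly one negative eigenvalue, its eigenfunction is even, and oddness of $v$ gives orthogonality to it. You instead give a self-contained variational proof via the ground-state (Darboux-type) substitution $v=\phi w$ with the explicit zero-energy solution $\phi(y)=\tanh y$, which converts the quadratic form into the manifestly nonnegative quantity $\int_0^\infty \phi^2 w_y^2\,dy$ plus a boundary term that vanishes thanks to $v(0)=0$ and Cauchy--Schwarz, the degeneracy of $\phi$ at the origin being handled rigorously by first working with odd test functions and then passing to the limit by density (legitimate, since both quadratic forms are $H^1$-continuous and odd smooth compactly supported functions are dense among odd $H^1$ functions). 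Your route costs a little extra care near $y=0$, but it buys independence from spectral-theoretic references, uses only the condition $v(0)=0$ (applied on each half-line) rather than full oddness, and exhibits an explicit nonnegative remainder $\int\phi^2 w_y^2\,dy$ quantifying the slack in the inequality; the paper's citation-based argument is shorter and stays aligned with its nonlinear Klein--Gordon antecedent \cite{KMM2}.
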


\begin{proof}[Sketch of proof]
	We write
		\[\mathcal{B}(v) = \frac32\intR {v}^2_x dx + \frac 12 \left(\intR {v}^2_x dx - \frac{2}{\lambda^2} \intR\sech^2\left(\frac{x}{\lambda}\right)v^2 dx\right).\]
	Notice that
		\[-\frac{d^2}{dx^2} - \frac{2}{\lambda^2} \sech^2\left(\frac{x}{\lambda}\right)\]
	has only one negative eigenvalue corresponding to an even eigenfunction. This comes from the fact that (see \cite[Exercise 12]{Goldman}) the index of the operator 	
		\[-\frac{h^2}{\nu}\frac{d^2}{dx^2}-\gamma \sech^2\left(\frac{x}{a}\right)\]
	is equal to the largest integer $N$ such that 
		\[N<\frac{1}{2}\sqrt{8 \gamma \nu a^2 h^{-2}+1}-\frac{1}{2}.\]
	Since $v$ is odd, 
		\begin{equation}\label{b15}
		\intR {v}^2_x dx - \frac{2}{\lambda^2} \intR\sech^2\left(\frac{x}{\lambda}\right)v^2 dx \ge 0,
		\end{equation}
	and then (\ref{coercividad1}) holds.
\end{proof}

\subsection{Estimates of the terms on (\ref{virial1})}

\begin{lemma} \label{estimation2}
	Let $u \in H^1(\R)$ be odd. Then for some $C>0$, $u=u_1+iu_2$,
		\begin{equation}
		\|u\|^2_{H^1_\alpha(\R)} \le C\left(B(u_1)+B(u_2)\right).
		\end{equation}
\end{lemma}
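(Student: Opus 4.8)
The plan is to estimate the two summands $B(u_1)$ and $B(u_2)$ separately, since the bilinear form \eqref{bilinear} already decouples the real and imaginary parts. Because $u$ is odd and $\alpha=\sech(\cdot/\lambda)$ is even, each $u_i$ is odd, so the substitution $v_i:=\alpha u_i$ produces odd functions in $H^1(\R)$. The manipulations of Section \ref{bilinearanalisys1} show that $B(u_i)=\mathcal B(v_i)$, where $\mathcal B(v)=2\intR v_x^2\,dx-\tfrac1{\lambda^2}\intR\sech^2(x/\lambda)\,v^2\,dx$. I would run the whole argument at the level of the $v_i$ and translate back to the weighted norm \eqref{weighted} of $u$ only at the very end.

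First I would recover the derivative part. Proposition \ref{coercividad1} gives at once $\intR (v_i)_x^2\,dx\le\tfrac23\,B(u_i)$, that is, control of the renormalized quantity $\intR(\alpha u_i)_x^2\,dx$. Expanding $(\alpha u_i)_x=\alpha_x u_i+\alpha u_{i,x}$ and integrating the cross term by parts exactly as in the derivation of \eqref{b11} relates $\intR(\alpha u_i)_x^2\,dx$ to the weighted derivative integral $\intR\varphi_x\,u_{i,x}^2\,dx$ modulo a zeroth-order term whose weight $\alpha\alpha_{xx}$ is a bounded multiple of $\varphi_x=\alpha^2$. Thus the first-order part of \eqref{weighted} is controlled by $B(u_i)$ as soon as the zeroth-order term $\intR\alpha\,u_i^2\,dx$ is, and the whole estimate reduces to bounding this weighted $L^2$ mass.

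For the zeroth-order term I would combine two facts about the odd function $v_i$. On the one hand, the odd-sector inequality \eqref{b15} applied to $v_i$ gives $\intR\sech^2(x/\lambda)\,v_i^2\,dx\le\tfrac{\lambda^2}2\intR (v_i)_x^2\,dx$. On the other hand, since $v_i$ is odd it vanishes at the origin, so on any bounded interval $\int_I v_i^2\,dx\le C_I\intR (v_i)_x^2\,dx$ by the fundamental theorem of calculus and Cauchy--Schwarz, while the exponential decay of the weight $\sech$ makes available a weighted (Muckenhoupt-type) Hardy inequality controlling the global weighted $L^2$ mass of $v_i$ by $\intR (v_i)_x^2\,dx$. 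Feeding these into the first step transfers the control to $B(u_i)$ and would close the bound.

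The step I expect to be the main obstacle is precisely this zeroth-order control. The Schr\"odinger operator behind $\mathcal B$ has no spectral gap: the bottom of its odd continuous spectrum is $0$, approached by the zero-energy resonance $\tanh(x/\lambda)$, so $\intR v_i^2\,dx$ cannot be dominated by $\intR (v_i)_x^2\,dx$ for a general odd function. The inequality is saved only by the decay that $v_i=\alpha u_i$ inherits from $\alpha$, and turning this qualitative fact into a quantitative estimate whose constants do not degrade is delicate; it is exactly here that the freedom in choosing $\lambda\in(1,\infty)$ large must be exploited.
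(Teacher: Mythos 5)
Your skeleton (decoupling $u_1,u_2$, passing to $v_i=\alpha u_i$, invoking Proposition \ref{coercividad1} and \eqref{b15}) matches the paper's, but the reduction step has a genuine gap: the two intermediate quantities you aim at are \emph{not} bounded by $B(u_i)$. Via \eqref{b11}, the error term relating $\intR\varphi_x u_{i,x}^2\,dx$ to $\intR (v_i)_x^2\,dx$ is $\intR\tfrac{\alpha_{xx}}{\alpha}\,v_i^2\,dx$ with $\tfrac{\alpha_{xx}}{\alpha}=\tfrac{1}{\lambda^2}\left(1-2\sech^2(x/\lambda)\right)\to\tfrac{1}{\lambda^2}$ as $|x|\to\infty$; so, measured against $v_i$, its weight is a \emph{constant}, and your reduction requires $\intR v_i^2\,dx\lesssim\intR (v_i)_x^2\,dx$ --- exactly the inequality you correctly observe is false. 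Your zeroth-order target $\intR\alpha\,u_i^2\,dx=\intR\cosh(x/\lambda)\,v_i^2\,dx$ is even larger, and your Hardy ingredient cannot reach it: $|v_i(x)|^2\le|x|\intR(v_i)_x^2\,dx$ only controls $\intR\omega\,v_i^2\,dx$ for weights $\omega$ with $\intR\omega(x)|x|\,dx<\infty$, and $\cosh(x/\lambda)$ is not such a weight. Moreover, no choice of large $\lambda$ and no appeal to the decay of $v_i$ can repair this, because the failure is a scaling phenomenon, not a decay one: take $v_i(x)=\psi(x/R)$ with $\psi$ a fixed smooth odd bump and $u_i=\cosh(x/\lambda)v_i\in H^1$ odd; then $B(u_i)=\mathcal B(v_i)\le 2\intR (v_i)_x^2\,dx=O(1/R)$, while $\intR v_i^2\,dx\approx R$, $\intR\alpha\,u_i^2\,dx\gtrsim R$, and, by \eqref{b11}, $\intR\varphi_x u_{i,x}^2\,dx\ge \tfrac1{\lambda^2}\intR v_i^2\,dx-O(1)\to\infty$. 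So the statements ``$\intR\varphi_x u_{i,x}^2\lesssim B(u_i)$'' and ``$\intR\alpha\,u_i^2\lesssim B(u_i)$'' are simply false, and the proof cannot be closed along your route.

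What you are missing is that the weight in the lemma must be taken much weaker than $\alpha$ or $\varphi_x$: the paper's proof (and its later use in Step 1) establishes the bound with weight $\sech(x)$, and this loss of weight is what makes the statement true. Concretely, the paper inserts an extra factor $\alpha^2$ \emph{before} expanding: from $\intR(\alpha w)_x^2\,dx\ge\intR\alpha^2\left((\alpha w)_x\right)^2dx$, expansion and integration by parts produce $\intR\alpha^4 w_x^2\,dx$ minus zeroth-order errors whose weights ($\alpha^2\alpha_x^2$ and $\alpha^3\alpha_{xx}$) are all $O(\alpha^4/\lambda^2)=O(\sech^4(x/\lambda))$ --- precisely the weight that \eqref{b15} \emph{does} control, since $\intR(\alpha w)_x^2\,dx\ge\tfrac{2}{\lambda^2}\intR\sech^4(x/\lambda)w^2\,dx$. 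Choosing $\lambda=100$, so that $\sech^4(x/\lambda)\gtrsim\sech(x)$, yields \eqref{cota11} and \eqref{cota12} with weight $\sech(x)$ and hence the lemma. If you want to keep your ``work at the level of $v_i$'' presentation, the correct version is to fix the target weight $\sech(x)$ from the start, write $u_i=\cosh(x/\lambda)v_i$, and use $|v_i(x)|^2\le|x|\intR(v_i)_x^2\,dx$ together with the integrability of $\sech(x)\cosh^2(x/\lambda)(1+|x|)$ for $\lambda>2$; the estimate holds for any weight dominated by $\alpha^4$, but never for $\alpha$ or $\varphi_x$ themselves.
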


\begin{proof} 
	We take $\lambda=100$. First, notice that from (\ref{b15}), we have 
		\begin{align*}
		\intR \left(\alpha w\right)_x^2\ge \frac{2}{100^2}\intR \sech^2\left(\frac{x}{100}\right)\left( \alpha w\right)^2 dx.  
		\end{align*}
	Using Proposition \ref{coercividad1}, this implies that 
		\begin{equation}\label{e0}
		B(w)\ge \frac{3}{2}\intR \left(\alpha w\right)_x dx \gtrsim \intR \sec^4\left(\frac{x}{100}\right) u^2 dx \gtrsim\intR \sech(x) w^2dx. 
		\end{equation}
	Thus, 
		\begin{equation}\label{cota11}
		\intR \sech(x) u_i^2dx \lesssim B(u_i), \quad i=1,2. 
		\end{equation}
	On the other hand, 
		\begin{align*}
		\intR \left(\alpha w\right)_x^2 dx 
		&\gtrsim \intR \alpha^2 \left(\alpha w\right)_x^2 dx\\
		&= \intR \alpha^4 {w}_x^2 dx+\intR \alpha^3\alpha_x \left(w^2\right)_x dx
		+ \intR \alpha^2\alpha_x^2 w^2 dx.
		\end{align*}
	We integrate by parts, 	
		\begin{align*}
		\intR \left(\alpha w\right)_x^2 dx 
		&\gtrsim \intR \alpha^4 {w}_x^2 dx-\intR \left(\alpha^3\alpha_x\right)_x w^2 dx
		+ \intR \alpha^2\alpha_x^2 w^2 dx\\
		& = \intR \alpha^4 {w}_x^2 dx-\intR \left(2 \alpha^2 \alpha^2_x + \alpha^3 \alpha_{xx}\right) w^2 dx.
		\end{align*}
	Then, from the definition of $\alpha$,	
		\begin{align*}
		\intR \left(\alpha w\right)_x^2 dx 
		&\gtrsim \intR \sech(x) {w}_x^2 dx-\intR \sech^4\left(\frac{x}{100}\right) w^2 dx.
		\end{align*}
	In other words, 
		\begin{equation*}
		\intR \sech(x) {w}_x^2 dx \lesssim \intR \left(\alpha w\right)_x^2 dx + \intR \sech^4\left(\frac{x}{100}\right) w^2 dx.
		\end{equation*}
	Then, from \eqref{e0}, we have that	
		\begin{equation}\label{e1}
		\intR \sech(x) {w}_x^2 dx \lesssim \intR \left(\alpha w\right)_x^2 dx.
		\end{equation}
	Hence, using Proposition \ref{coercividad1},
		\begin{align}\label{cota12}
		\intR \sech(x) {u_i}_x^2 dx \lesssim  B(u_i), \quad i=1, 2.
		\end{align}
	Finally, from (\ref{cota11}) y (\ref{cota12}), we get
		\[\|u(t)\|_{H^1_\alpha(\R)}^2 \lesssim B(u_1)+B(u_2).\]
\end{proof}

\begin{lemma} \label{estimation1}
	There exists $\varepsilon>0$ such that for every odd solution $u$ of (\ref{NLS1}) satisfying (\ref{condition_s})
	then
		\begin{equation}\label{cotaI1}
		-\deri I (u(t))\ge C \|u(t)\|^2_{H^1_\alpha(\R)}.
		\end{equation}
	where $C>0$. 
\end{lemma}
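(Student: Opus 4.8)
The plan is to read off the virial identity \eqref{virial1} as the coercive bilinear form plus a controllable nonlinear remainder, and then to absorb the latter using smallness. Writing $u=u_1+iu_2$, the first two terms of $-\deri I(t)$ are exactly $B(u_1)+B(u_2)$ in the sense of \eqref{bilinear}, since $|u_x|^2=(u_1)_x^2+(u_2)_x^2$ and $|u|^2=u_1^2+u_2^2$. Hence
\[
-\deri I(t)= B(u_1)+B(u_2) - \intR \varphi_x\left[F(|u|^2)-f(|u|^2)|u|^2\right]dx,
\]
and Lemma \ref{estimation2} gives $B(u_1)+B(u_2)\gtrsim \|u\|^2_{H^1_\alpha(\R)}$. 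It then remains to show that the nonlinear term is only a small fraction of $\|u\|^2_{H^1_\alpha(\R)}$.

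First I would bound the integrand pointwise. From \eqref{hipotesisf}, $|f(s)|\lesssim s^{(p-1)/2}$ yields $|F(s)|\lesssim s^{(p+1)/2}$ and $|f(s)s|\lesssim s^{(p+1)/2}$, so that $|F(|u|^2)-f(|u|^2)|u|^2|\lesssim |u|^{p+1}$. Recalling that $\varphi_x=\alpha^2=\sech^2\!\left(\frac{x}{100}\right)\le \alpha$, I estimate
\[
\left|\intR \varphi_x\left[F(|u|^2)-f(|u|^2)|u|^2\right]dx\right|\lesssim \intR \alpha\,|u|^{p+1}dx \le \|u\|_{L^{\infty}}^{p-1}\intR \alpha\,|u|^2 dx \le \|u\|_{L^{\infty}}^{p-1}\|u\|^2_{H^1_\alpha(\R)},
\]
where the first inequality in the chain uses $|u|^{p+1}=|u|^{p-1}|u|^2\le \|u\|_{L^\infty}^{p-1}|u|^2$ ($p-1>0$), and the last step uses the definition \eqref{weighted} of the weighted norm.

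The crucial input is the uniform-in-time smallness $\|u(t)\|_{H^1(\R)}\lesssim \varepsilon$, which combined with the one-dimensional Sobolev embedding $\|u\|_{L^\infty}\lesssim \|u\|_{H^1}$ makes $\|u(t)\|_{L^\infty}^{p-1}\lesssim \varepsilon^{p-1}$. Conservation of mass \eqref{mass} controls $\|u(t)\|_{L^2}$; for the $\dot H^1$ part I would use conservation of energy \eqref{energy}: in the defocusing case $F\ge 0$ gives $\|u_x(t)\|_{L^2}^2\le 2E$ directly, which is why no smallness is needed there, while in the focusing case a Gagliardo--Nirenberg bound on $G(u)$ together with a standard continuity/bootstrap argument propagates the smallness of the initial data to all times. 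Putting these together,
\[
-\deri I(t)\ge c\,\|u(t)\|^2_{H^1_\alpha(\R)} - C\varepsilon^{p-1}\|u(t)\|^2_{H^1_\alpha(\R)},
\]
and choosing $\varepsilon$ small enough that $C\varepsilon^{p-1}\le c/2$ yields \eqref{cotaI1}.

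The main obstacle is the uniform-in-time $H^1$ smallness in the focusing regime: one must run the Gagliardo--Nirenberg/bootstrap argument carefully to ensure $\|u(t)\|_{H^1(\R)}$ does not grow, and this is precisely the step where the hypothesis \eqref{condition_s} is genuinely used, and where it can be dropped in the defocusing case thanks to the sign of $F$.
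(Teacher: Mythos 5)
Your reduction of \eqref{virial1} to $B(u_1)+B(u_2)$ minus the nonlinear term, the pointwise bound $|F(|u|^2)-f(|u|^2)|u|^2|\lesssim |u|^{p+1}$, and the bootstrap for uniform-in-time $H^1$ smallness all match the paper. The gap is in your absorption step, and it is not repairable as written: the chain
\begin{equation*}
\intR \varphi_x |u|^{p+1}dx \;\lesssim\; \|u\|_{L^\infty}^{p-1}\intR \alpha |u|^2 dx \;\le\; \|u\|_{L^\infty}^{p-1}\|u\|^2_{H^1_\alpha(\R)}
\end{equation*}
conflates two different weights. The norm that Lemma \ref{estimation2} actually bounds by $B(u_1)+B(u_2)$ carries the \emph{rapidly} decaying weight $\sech(x)$ (see \eqref{cota11}--\eqref{cota12}, and the explicit definition of $\|\cdot\|_{H^1_\alpha}$ in Section \ref{proof_Hartree}), whereas the virial's nonlinear term carries the \emph{slowly} decaying weight $\varphi_x=\sech^2(x/100)$ (and your relaxation $\varphi_x\le\alpha=\sech(x/100)$ makes this worse). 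So either you read $\|u\|_{H^1_\alpha}$ with weight $\sech(x/100)$, in which case your last inequality is trivial but Lemma \ref{estimation2} is false for that norm; or you read it with the paper's weight $\sech(x)$, in which case $\intR \sech(x/100)|u|^2dx\le \|u\|^2_{H^1_\alpha}$ fails since $\sech(x/100)\gg\sech(x)$ at infinity. The obstruction is genuine: $\intR \sech^2(x/100)|u|^2dx=\|\alpha u\|^2_{L^2}$, and for odd $u$ the coercive form only controls $\|(\alpha u)_x\|^2_{L^2}$ plus weighted $L^2$ norms with much faster decay; there is no Poincar\'e inequality on the line. Concretely, let $v$ be odd, smooth, $0\le v\le 1$, supported in $R\le |x|\le 3R$ with $v\equiv 1$ on $\tfrac32 R\le|x|\le\tfrac52 R$ and $|v_x|\lesssim 1/R$, and set $u=\delta\, v(x)\cosh(x/100)$, which is odd and in $H^1$. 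Then $\alpha u=\delta v$, so $\intR\sech^2(x/100)|u|^2dx=\delta^2\intR v^2\gtrsim \delta^2 R$, while $B(u)\le 2\delta^2\intR v_x^2\lesssim \delta^2/R$: the quantity you need to absorb exceeds the coercive form by a factor $\sim R^2$. Since both sides scale like $\delta^2$, taking $\delta$ small to enforce $\|u\|_{H^1}\le\varepsilon$ does not help; the inequality you need fails once $R\gg \varepsilon^{-(p-1)/2}$.

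This weight mismatch is exactly why the paper does not use the crude H\"older bound. Instead, it exploits oddness a second time: writing $v=\alpha u$ (which vanishes at $x=0$), it converts $\intR\sech^2(x/\lambda)|u|^{p+1}dx\simeq \intp e^{(p-1)x/\lambda}|v|^{p+1}dx$, integrates by parts against the antiderivative of the exponential---the boundary term at the origin vanishes precisely because $u$ is odd---and after Young's inequality lands on $\|u\|^{p-1}_{L^\infty}\intp |(\alpha u)_x|^2dx$, i.e.\ on the \emph{derivative} part of the coercive form, which Proposition \ref{coercividad1} does control by $B(u_1)+B(u_2)$. In the example above, your H\"older step overshoots this quantity by an unbounded factor $\sim R$, so the structure lost there cannot be recovered later; to close the argument you need the integration-by-parts mechanism (or some substitute that genuinely uses the vanishing of $u$ at the origin), not just a pointwise bound and the weighted norm.
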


\begin{proof}
	Recall from \eqref{virial1} and the analysis of the previous section that 	
		\begin{equation*}
		\begin{aligned}
		-\deri I(u(t))= 
		&~{} 2 \intR \varphi_x |u_x|^2 dx - \frac{1}{2}\intR \varphi_{xxx} |u|^2 dx- \intR \varphi_x \left[F\left(|u|^2\right)-f(|u|^2)|u|^2\right] dx\\
		&~{}= B(u_1)+ B(u_2) - \intR \varphi_x \left[F\left(|u|^2\right)-f(|u|^2)|u|^2\right] dx.
		\end{aligned}
		\end{equation*} 
	Consecuently, in order to complete the proof, we need to control the remaining terms of (\ref{virial1}), since the terms involving the bilinear form $B$ have already been estimated by Lemma \ref{estimation2}. 
	\par Note that 
		\[\left|F\left(|u|^2\right)-f(|u|^2)|u|^2 \right| \lesssim |u|^{p+1}.\]
	Since $u$ is odd, 
		\begin{align*}
		\intR \sech^2\left(\frac{x}{\lambda}\right)|u|^{p+1}dx 
		& =2 \intp \sech^2\left(\frac{x}{\lambda}\right)|u|^{p+1}dx \\
		& = 2 \intp		 	\sech^{-(p-1)}\left(\frac{x}{\lambda}\right)\sech^{p+1}\left(\frac{x}{\lambda}\right)|u|^{p+1} \\
		& \simeq \intp  e^{(p-1)x/\lambda}
		\sech^{p+1}\left(\frac{x}{\lambda}\right)|u|^{p+1}dx.
		\end{align*}
	\par With a slight abuse of notation, set $v(t,x):=\sech\left(\frac{x}{\lambda}\right)u(t,x)$.
	Note that $v(t,0)=0$ and vanishes at infinity $\forall t \in \R$. Then, integrating by parts, 
		\begin{align*}
		\intp  e^{(p-1)x/\lambda} |v|^{p+1}dx 
		&=-\frac{\lambda}{p-1} \intp  e^{(p-1)x/\lambda}
		\left(|v|^{p+1}\right)_x dx \\
		&=-\frac{\lambda(p+1)}{p-1}\re \intp e^{(p-1)x/\lambda} |v|^{p-1} \bar v{v}_x dx.
		\end{align*}
	Hence,
		\begin{align*}
		\intp  e^{(p-1)x/\lambda} |v|^{p+1}dx 
		& =-\frac{\lambda(p+1)}{p-1}\re \intp e^{(p-1)x/2\lambda} |v|^{\frac{p-1}{2}}\overline{v}v_x\left(e^{(p-1)x/2\lambda}|v|^{\frac{p-1}{2}}\right) dx \\
		& \lesssim \|u\|^{(p-1)/2}_{L^{\infty}(\R)} \re \intp e^{(p-1)x/2\lambda} |v|^{\frac{p-1}{2}}\overline{v}v_x dx \\
		&\lesssim \|u\|^{(p-1)/2}_{L^{\infty}(\R)} \intp e^{(p-1)x/2\lambda} |v|^{\frac{p-1}{2}}|v||v_x| dx\\
		& = \|u\|^{(p-1)/2}_{L^{\infty}(\R)} \intp e^{(p-1)x/2\lambda} |v|^{\frac{p+1}{2}}|v_x| dx.
		\end{align*}
	By Young's inequality,
		\begin{align*}
		\intp  e^{(p-1)x/\lambda}
		|v|^{p+1}dx & \lesssim \|u\|^{p-1}_{L^{\infty}(\R)} \intp |v_x|^2 dx+\intp e^{(p-1)x/\lambda} |v|^{p+1}dx \\
		&\simeq \|u\|^{p-1}_{L^{\infty}(\R)} \intp |v_x|^2 dx+\intp \sech^{p-1}\left(\frac{x}{\lambda}\right)\sech^{p+1}\left(\frac{x}{\lambda}\right)|u|^{p+1}dx \\
		& = \|u\|^{p-1}_{L^{\infty}(\R)} \intp |v_x|^2 dx+\intp \sech^{2}\left(\frac{x}{\lambda}\right)|u|^{p+1}dx,
		\end{align*}
	which actually means that
		\[\intR \sech^2\left(\frac{x}{\lambda}\right)|u|^{p+1}dx \lesssim \|u\|^{p-1}_{L^{\infty}(\R)} \intp |\left(\alpha u\right)_x|^2 dx.\]
	By Sobolev's embedding,
		\[\intR \sech^2\left(\frac{x}{\lambda}\right)|u|^{p+1}dx \lesssim \|u\|^{p-1}_{H^1(\R)} \intp |\left(\alpha u\right)_x|^2 dx.\]
	Now, it is a fact that for every $0<\varepsilon<1$, there exists $\delta(\varepsilon)$ such that $\|u(0)\|_{H^1(\R)}\le \delta(\epsilon)$ implies that $\displaystyle \sup_{t \in \R} \|u\|_{H^1(\R)}< \varepsilon$ (see \cite[Corollary 6.1.4]{Cazenave} or the conservation of energy and mass \eqref{energy}-\eqref{mass}). This way, from Proposition \ref{coercividad1}, we get
		\[\intR \sech^2\left(\frac{x}{\lambda}\right)|u|^{p+1}dx \lesssim \varepsilon^{p-1} \left(B(u_1)+B(u_2)\right).\]
	So, choosing $\varepsilon$ sufficiently small, (\ref{cotaI1}) is proved.
\end{proof}

\begin{remark}[Defocusing case]\label{defocusing1}
	Note that in the semilinear defocusing case $f(|u|^2)=|u|^{p-1}$,
		\[ F\left(|u|^2\right)-f(|u|^2)|u|^2 = \left(\frac{2}{p+1} - 1 \right)|u|^{p+1}.  \]
	Since $p>1$, $\frac{2}{p+1}|u|^{p+1} - 1<0$ which means that the remaining term on \eqref{virial1} involving the nonlienarity is positive: 
		\[ - \intR \varphi_x \left[F\left(|u|^2\right)-f(|u|^2)|u|^2\right] dx \ge 0 \]
	and then Lemma \ref{estimation2} is enough to conclude Lemma \ref{estimation1}.	
\end{remark}	

With this estimation, we can now prove the key to get Theorem \ref{T1}.

\begin{proposition}\label{propositionkey}
	There exists a constant $C>0$ such that 
		\begin{equation}\label{key1}
		\intp \|u(t)\|_{H^1_\alpha(\R)}^2 dt \le C\varepsilon^2.
		\end{equation}
\end{proposition}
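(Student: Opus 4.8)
The plan is to integrate in time the pointwise-in-time coercivity estimate already furnished by Lemma \ref{estimation1}. That lemma gives a constant $C>0$ with
\[
-\deri I(u(t)) \ge C\,\|u(t)\|_{H^1_\alpha(\R)}^2
\]
for every $t$, so the left-hand side is a perfect time derivative and the right-hand side is exactly the integrand we wish to control. Integrating over $[0,T]$ and applying the fundamental theorem of calculus, I get
\[
C\int_0^T \|u(t)\|_{H^1_\alpha(\R)}^2\,dt \le I(u(0)) - I(u(T)) \le |I(u(0))| + |I(u(T))|.
\]
Thus the whole matter reduces to a \emph{uniform-in-time} bound on the virial functional $I$.

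The key step is therefore to estimate $|I(u(t))|$ independently of $t$. Since $\varphi(x)=\lambda\tanh(x/\lambda)$ satisfies $\|\varphi\|_{L^\infty}\le\lambda$, Cauchy--Schwarz applied to the definition \eqref{I} gives
\[
|I(u(t))| = \Big| \im\intR \varphi(x)\, u(t,x)\,\overline{u}_x(t,x)\,dx \Big| \le \lambda\, \|u(t)\|_{L^2(\R)}\,\|u_x(t)\|_{L^2(\R)} \le \lambda\,\|u(t)\|_{H^1(\R)}^2.
\]
Now I invoke the propagation of smallness already recorded in the proof of Lemma \ref{estimation1}: by conservation of mass and energy \eqref{mass}--\eqref{energy} (equivalently \cite[Corollary 6.1.4]{Cazenave}), the hypothesis \eqref{condition_s} yields $\sup_{t}\|u(t)\|_{H^1(\R)} \lesssim \varepsilon$. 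Combining the two displays, $|I(u(t))| \lesssim \lambda\,\varepsilon^2$ uniformly in $t$, and in particular both boundary terms $|I(u(0))|$ and $|I(u(T))|$ are bounded by a constant multiple of $\varepsilon^2$ (with $\lambda=100$ fixed as in Lemma \ref{estimation2}).

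Putting these together, $\int_0^T \|u(t)\|_{H^1_\alpha(\R)}^2\,dt \le C\varepsilon^2$ with $C$ independent of $T$, and letting $T\to\infty$ yields \eqref{key1} by monotone convergence. I do not expect a genuine obstacle here, since all the analytic work was front-loaded into Lemma \ref{estimation1}; the only point requiring care is that the control of the boundary terms is \emph{global} in time, which is precisely what the conserved $H^1$ smallness provides. In the defocusing case one argues identically, using Remark \ref{defocusing1} in place of the smallness hypothesis so that no restriction on $\varepsilon$ is needed for the coercivity step.
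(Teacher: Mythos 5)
Your proposal is correct and follows essentially the same route as the paper: integrate the coercivity estimate \eqref{cotaI1} from Lemma \ref{estimation1} over $[0,\tau]$, bound the virial functional $I$ at the endpoints via Cauchy--Schwarz together with the $H^1$ smallness, and let $\tau\to\infty$. In fact your treatment of the boundary term at time $T$ is slightly more careful than the paper's, which simply drops $-I(u(\tau))$ as if $I(u(\tau))\ge 0$; your uniform-in-time bound $|I(u(t))|\lesssim \lambda\varepsilon^2$, obtained from conservation of mass and energy, is exactly what is needed to justify that step.
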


\begin{proof}
	Let $\tau>0$. We integrate (\ref{cotaI1}) over $[0,\tau]$ 
		\begin{equation*}
		\int_0^{\tau} \|u(t)\|^2_{H^1_\alpha(\R)} dt \le C\left(I(u(0))-I(u(\tau))\right) \le CI(u(0))
		\end{equation*}
	From H\"older inequality and (\ref{condition_s}) we get that
		\[I(u(0)) \le \|u(0)\|_{L^2(\R)}\|u_x(0)\|_{L^2(\R)}\le \varepsilon^2.\]
	This last fact implies that	
		\begin{equation*}
		\int_0^{\tau} \|u(t)\|^2_{H^1_\alpha(\R)} dt \le C \varepsilon^2.
		\end{equation*}
	Now, taking $\tau \to \infty$, we conclude the proof.	
\end{proof}

\subsection{End of proof of Theorem \ref{T1}}\label{proof1} Now Theorem \ref{T1} is ready to be proved:

\bigskip 

\textit{Step 1: The $L^2$ norm tends to zero:} Let $\varphi \in C^{\infty}(\R)$ be bounded. Then we compute
	\begin{align*}
	\deri \left(\frac{1}{2}\intR \varphi|u(t)|^2 dx\right) 
	&=\re \intR \varphi\overline{u} u_t dx =-\re \intR i \varphi\overline{u}  \left(iu_t\right)  dx\\
	&= \im \intR \varphi \overline{u} \left(iu_t\right) dx.
	\end{align*} 
Hence, using equation (\ref{NLS1}) and integrating by parts
	\begin{align*}
	\deri \left(\frac{1}{2}\intR \varphi|u(t)|^2 dx\right) 
	& = -\im \intR \varphi \overline{u} u_{xx} dx + \im \intR \varphi f(|u^2|)u \overline{u} dx \\
	& = \im \intR \varphi \overline{u}_x u_{x} dx+\im \intR \varphi_x \overline{u} u_{x} dx + \im \intR \varphi f(|u^2|) |u|^2 dx \\
	& = \im \intR \varphi |u_x|^2 dx + \im \intR \varphi_x \overline{u} u_{x} dx + \im \intR \varphi f(|u^2|) |u|^2 dx.
	\end{align*} 
Since the integrals on the first and third term are real, we get the following identity
	\begin{equation}\label{I1}
	\deri \left(\frac{1}{2}\intR \varphi|u(t)|^2\right)=\im \intR \varphi_x \overline{u} u_{x} dx.
	\end{equation}
Thus
	\begin{align*}
	\left|\deri \left(\frac{1}{2}\intR \varphi|u(t)|^2dx\right)\right|
	& \le \intR |\varphi_x| |\overline{u}(t)| |u_{x}(t)| dx\\
	&\lesssim \intR |\varphi_x| |\overline{u}(t)|^2 dx +\intR |\varphi_x| |u_{x}(t)|^2 dx.
	\end{align*} 
We take $\varphi(x)=\sech(x)$ and get
	\begin{align*}
	\left| \deri \|u(t)\|^2_{L^2_\alpha(\R)}\right|
	& = \left|\deri \left(\intR \sech(x)|u(t,x)|^2dx\right)\right| \\
	&\lesssim \intR \sech(x) |\overline{u}(t,x)|^2 dx +\intR \sech(x) |u_{x}(t,x)|^2 dx = \|u(t)\|^2_{H^1_\alpha(\R)}.
	\end{align*} 
\par From \eqref{key1}, there exists a sequence ${t_n} \in \R$, $t_n \to \infty$ such that $\|u(t_n)\|^2_{L^2_w(\R)}\to 0.$ Consider $t \in \R$, integrate over $[ t, t_n]$, and take $t_n \to \infty$. Then 
	\[\|u(t)\|^2_{L^2_\alpha(\R)}\lesssim \int_t^{\infty} \|u(s)\|^2_{H^1_\alpha(\R)} ds. \]
In consequence
	\begin{equation}\label{key2}
	\lim_{t\to \infty}\|u(t)\|_{L^2_w(\R)}=0.
	\end{equation}

\bigskip

\noindent \textit{Step 2: The $L^{\infty}$ norm tends to zero:} We state the following:
\bigskip 
\begin{claim}\label{claim1}
	For every interval $I$ there exists $\tilde{x}(t) \in I$ such that, as $t$ tends to infinity,
		\[|u(t, \tilde{x}(t))|^2\to 0.\]
\end{claim}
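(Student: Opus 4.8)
The plan is to read off the claim directly from the weighted $L^2$ decay already obtained in Step 1, via an elementary averaging argument. First I would observe that on any bounded interval $I$ the weight $\sech(x)$ is bounded below by a positive constant $c_I := \min_{x\in\bar I}\sech(x)>0$. Hence
\[
\int_I |u(t,x)|^2\,dx \le \frac{1}{c_I}\int_I \sech(x)\,|u(t,x)|^2\,dx \le \frac{1}{c_I}\,\|u(t)\|_{L^2_\alpha(\R)}^2 ,
\]
and by \eqref{key2} the right-hand side tends to $0$ as $t\to\infty$. Thus the unweighted $L^2(I)$ norm of $u(t)$ decays to zero, which already delivers the first half of \eqref{result_s1}.

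Next I would convert this integral decay into pointwise decay at a moving point using the mean value theorem for integrals. Since $u(t)\in H^1(\R)$ and we are in one space dimension, the Sobolev embedding $H^1(\R)\hookrightarrow C(\R)$ ensures that $x\mapsto |u(t,x)|^2$ is continuous on the compact set $\bar I$ for each fixed $t$. The mean value theorem for integrals then yields a point $\tilde x(t)\in\bar I$ with
\[
|u(t,\tilde x(t))|^2 = \frac{1}{|I|}\int_I |u(t,x)|^2\,dx .
\]
Combining this with the previous display gives $|u(t,\tilde x(t))|^2 \le \frac{1}{c_I|I|}\,\|u(t)\|_{L^2_\alpha(\R)}^2 \to 0$, which is exactly the assertion of the claim (up to replacing $I$ by its closure, harmless since $\bar I$ is still bounded).

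I do not expect a genuine obstacle here: the substantive work, namely the virial estimate leading to the bound \eqref{key1} and hence to \eqref{key2}, is already in place, and what remains is elementary. The only point meriting a word of care is the continuity of $u(t,\cdot)$ required to apply the mean value theorem, which is guaranteed by the one-dimensional Sobolev embedding; in higher dimensions this step would fail. Finally I would emphasize that Claim \ref{claim1} is only an intermediate step toward the $L^\infty(I)$ statement in \eqref{result_s1}: the moving base point $\tilde x(t)$ at which $u$ is small is meant to be combined with a fundamental-theorem-of-calculus (or Gagliardo--Nirenberg) control of $\|u_x\|$ over $I$ so as to upgrade smallness at a single point into smallness of $\|u(t)\|_{L^\infty(I)}$.
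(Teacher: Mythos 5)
Your proposal is correct and is essentially the paper's own argument: both rest on the weighted decay \eqref{key2} together with the fact that $\sech$ is bounded below on a bounded interval, so that $\int_I|u(t,x)|^2\,dx\to 0$; the paper merely phrases the final step contrapositively (if $|u(t_n,\cdot)|^2>\varepsilon_0$ on all of $I$ for some sequence $t_n\to\infty$, integration over $I$ contradicts \eqref{key2}), whereas you extract $\tilde x(t)$ directly via the mean value theorem for integrals. This is only a cosmetic difference; your appeal to the Sobolev embedding for continuity is legitimate, and could even be dispensed with by choosing $\tilde x(t)$ as a near-infimum point of $|u(t,\cdot)|^2$ on $I$, which is all the subsequent fundamental-theorem-of-calculus step requires.
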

		
\begin{proof}
	Let $I\in \R$ be an interval. By contradiction, Suppose that there exists $\varepsilon_0>0$ such that $\forall n > 0$, $\exists t_n > n$ 
		\[|u(t_n, x)|^2> \varepsilon_0 \quad \forall x \in I .\]
	Integrating over $I$, we get 
		\[\int_I |u(t_n, x)|^2dx> |I| \varepsilon_0,\]
	wich contradicts ($\ref{key2}$).
\end{proof}	

Let $x \in I$. By Fundamental Theorem of calculus and H\"older's inequality 
	\begin{align*}
	|u(t,x)|^2 - |u(t, \tilde{x}(t))|^2 
	&= \int_{\tilde{x}(t)}^{x} \left(|u|^2\right)_x dx \le 2 \int_{\tilde{x}(t)}^{x} |u||u_x|dx\\
	& \le 2 \|u(t)\|_{L^2(I)}\|u_x(t)\|_{L^2(I)}.
	\end{align*}
Then we get
	\begin{equation}\label{f1}
	|u(t,x)|^2 \lesssim |u(t, \tilde{x}(t))|^2 + 2 \|u(t)\|_{L^2(I)}\|u_x(t)\|_{L^2(I)},\quad \forall x \in I.
	\end{equation}
Now, since \eqref{condition_s} holds for $\varepsilon>0$ as small as needed, 
	\[ \sup_{t \in \R} \|u(t)\|_{H^1(\R)}< \infty. 
	 \]
Also, this smallness condition is not needed if the nonlinearity is defocusing. Hence, taking $t \to \infty$ in \eqref{f1}, from Claim \ref{claim1} and \eqref{key2}, we get that 
	\[|u(t,x)|^2 \to 0, \quad \forall x \in I.\]
Which implies (\ref{result_s1}). The proof of Theorem \ref{T1} is complete.

\bigskip

\section{NLS with potential}\label{withpotential}

This section is devoted to the proof of Theorem \ref{T1a}. We consider now the NLS equation with a nontrivial potential $V$:
	\begin{equation}
	\label{NLS} iu_t+u_{xx}= \mu V(x)u+ f\left(|u|^2\right)u,  \quad (t, x) \in \mathbb{R}\times \mathbb{R}.
	\end{equation}
As done in the previous section, we introduce a virial identity that will be used to estimate the $H^1_\alpha$-norm of a solution of equation (\ref{NLS}). However, because of the potential term $V$, new estimates must be proved in order  to get Theorem \ref{T1a}.

\subsection{Virial Identity} 

Suppose again $\varphi \in C^{\infty}(\mathbb{R})$ bounded and recall from Subsection \ref{VIwithoutpotential} the definition 
	\begin{equation*}
	I(u(t))=\im\intR \varphi(x) u(t,x)\overline{u}_x(t,x)dx.
	\end{equation*}
Following the proof of Lemma \ref{VirialIdentity1}, we have now 

\begin{lemma} \label{VirialIdentity2}
	Let $u(t) \in H^1(\R)$ be a bounded in time solution of equation (\ref{NLS}). Then
		\begin{equation}\label{virial}
		\begin{aligned}
		-\deri I(t)= 
		&~{} 2 \intR \varphi_x |u_x|^2 dx - \frac{1}{2}\intR \varphi_{xxx} |u|^2 dx-\mu\intR \varphi V_x |u|^2dx \\
		& ~{} - \intR \varphi_x \left[F\left(|u|^2\right)-f(|u|^2)|u|^2\right] dx.
		\end{aligned}
		\end{equation}
	\end{lemma}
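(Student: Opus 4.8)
The plan is to follow verbatim the computation in the proof of Lemma \ref{VirialIdentity1}, carrying along the extra potential term. Differentiating $I(u(t))$ in time and integrating by parts exactly as in \eqref{a1}, one reaches the identity
\[
\deri I(u(t)) = -2\re \intR \varphi (iu_t)\overline{u}_x\, dx - \re \intR \varphi_x (iu_t)\overline{u}\, dx,
\]
which does not yet use the equation. I would then substitute $iu_t = -u_{xx} + \mu V u + f(|u|^2)u$ from \eqref{NLS}. The contributions of the $-u_{xx}$ and $f(|u|^2)u$ pieces are identical to those already handled in Lemma \ref{VirialIdentity1}, and reproduce the first, second and fourth terms on the right-hand side of \eqref{virial}. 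Hence the only point requiring attention is the term coming from $\mu V u$.

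For that term I would collect the two contributions
\[
-2\mu\,\re\intR \varphi V u\,\overline{u}_x\, dx \;-\; \mu\,\re\intR \varphi_x V |u|^2\, dx,
\]
and use $2\re(u\overline{u}_x) = (|u|^2)_x$ to rewrite the first integral as $-\mu\intR \varphi V (|u|^2)_x\, dx$. Integrating by parts (all boundary terms vanish since $\varphi$ is bounded, $V$ is Schwartz and $u,u_x \in L^2$) gives $\mu\intR (\varphi V)_x |u|^2\, dx = \mu\intR \varphi_x V|u|^2\, dx + \mu\intR \varphi V_x |u|^2\, dx$. Adding this to the second integral above, the two $\varphi_x V$ contributions cancel, leaving the net contribution $\mu\intR \varphi V_x |u|^2\, dx$ to $\deri I$, that is, $-\mu\intR \varphi V_x |u|^2\, dx$ to $-\deri I$, which is exactly the new term appearing in \eqref{virial}.

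This computation is routine and amounts mostly to bookkeeping; the only genuine point is the cancellation of the two $\varphi_x V|u|^2$ terms, which is what makes the final identity depend on $V_x$ rather than on $V$ itself. This structural feature will matter later, since the decay hypothesis \eqref{hipotesisV} controls both $V$ and $V'$ against $\cosh(2x)$. Well-definedness and boundedness in time of $I(u(t))$ follow as in Lemma \ref{VirialIdentity1}, from $\varphi$ bounded and $u \in C(\R;H^1(\R))$.
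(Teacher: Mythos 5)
Your proposal is correct and follows essentially the same route as the paper's own proof: both start from the intermediate identity \eqref{a1}, reuse the computation of Lemma \ref{VirialIdentity1} for the $u_{xx}$ and $f(|u|^2)u$ pieces, and reduce the potential contribution via $2\re(u\overline{u}_x)=(|u|^2)_x$ and one integration by parts, with the two $\varphi_x V|u|^2$ terms cancelling to leave $-\mu\intR \varphi V_x|u|^2\,dx$. Nothing is missing.
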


\begin{proof}[Sketch of proof]
	From the proof of Lemma \ref{VirialIdentity1} (equation \eqref{a1}) we know that	
		\begin{align*}
		\deri I(u(t)) = - 2\re \intR \varphi (i u_t) \overline{u}_x dx - \re \intR \varphi_x \left(i u_t\right)\overline{u} dx.
		\end{align*}
	We use (\ref{NLS}) to obtain
		\begin{align*}
		\deri I(u(t))=
		& ~{}2\re \intR \varphi u_{xx} \overline{u}_x dx + \re \intR \varphi_x  u_{xx} \overline{u} dx - 2\mu \re \intR \varphi Vu \overline{u}_x dx \\ 
		& ~{}-\mu \re \intR \varphi_x Vu \overline{u}dx - 2\re \intR \varphi f\left(|u|^2\right)u \overline{u}_x dx - \re \intR \varphi_x f\left(|u|^2\right)u\overline{u} dx.
		\end{align*}
	From the last equation, we are only interested in the terms involving the potential $V$, since the rest of them were analyzed in the proof of Lemma \ref{VirialIdentity1}. Then we compute
		\begin{align*}
		2 \re \intR \varphi Vu \overline{u}_x dx 
		+ \re \intR \varphi_x Vu \overline{u}dx
		&=\intR \varphi V\left(|u|^2 \right)_xdx+ \intR \varphi_x V|u|^2dx \\
		&=-\intR \varphi V_x|u|^2 dx.
		\end{align*}
Combining this with Lemma \ref{VirialIdentity1}, we conclude (\ref{virial}).
\end{proof}

\subsection{Analysis of a modified bilinear form}

Now we can see the differences between the cases with and without potential. In this occasion,  we define the bilinear form ($u=u_1+iu_2$, $u_i\in\R$)
	\[B(w)= 2 \intR \varphi_x {w}_x^2 dx - \frac{1}{2}\int _{\mathbb{R}} \varphi_{xxx} w^2 dx- \mu \intR \varphi V_xw^2 dx, \quad w=u_i,\quad  i=1,2.\]
		
Consider $\lambda \in (1, \infty)$, $\varphi(x)=\lambda \tanh \left(\frac{x}{\lambda}\right)$ and $\alpha(x)=\sqrt{\varphi_x(x)}$. Since $\alpha ^2 = \varphi_x$, we can write
	\begin{equation}\label{b3}
	\intR \varphi V_x w^2 dx = \intR V_x \frac{\varphi}{\varphi_x}\left(\alpha w\right)^2 dx.
	\end{equation}
Thus, from (\ref{b11}), (\ref{b3}) y (\ref{b12}), 
	\begin{align*}
	B(w)=2\intR \left(\alpha w\right)^2_x dx - \intR\left( \frac{\alpha_x^2}{\alpha^2}-\frac{\alpha_{xx}}{\alpha}\right)\left(\alpha w \right)^2 dx- \mu \intR V_x \frac{\varphi}{\varphi_x}\left(\alpha w\right)^2 dx.
	\end{align*}
Then, from computations of subsection \ref{bilinearanalisys1} we have that
	\begin{align*}
	B(w) = 2\intR \left(\alpha w\right)^2_x dx - \frac{1}{\lambda^2} \intR\sech^2\left(\frac{x}{\lambda}\right)\left(\alpha w \right)^2 dx- \mu \intR V_x \frac{\varphi}{\varphi_x}\left(\alpha w\right)^2 dx.
	\end{align*}
We set 
	\[\mathcal{B}(v) = 2\intR {v}^2_x dx - \frac{1}{\lambda^2} \intR\sech^2\left(\frac{x}{\lambda}\right)v^2 dx-\mu \intR V_x \frac{\varphi}{\varphi_x} v^2 dx,\]
where $v=\alpha w$. Then
	\[\mathcal{B}(v)=B(w).\]	
Now we prove a modified version of Proposition \ref{coercividad1}.		
		
\begin{proposition}\label{pcoercividad}
	Let $v \in H^1(\R)$ be odd. Then, for $\lambda > 0$ sufficiently small, 
		\begin{equation}\label{coercividad}
		\mathcal{B}(v)\ge \frac{1}{2} \intR v_x^2 dx.
		\end{equation}
\end{proposition}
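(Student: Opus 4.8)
The plan is to view $\mathcal{B}$ as a perturbation of the potential-free form treated in Proposition~\ref{coercividad1}. Writing
\[
\mathcal{B}(v)=\mathcal{B}_0(v)-\mu\intR V_x\,\frac{\varphi}{\varphi_x}\,v^2\,dx,\qquad
\mathcal{B}_0(v):=2\intR v_x^2\,dx-\frac1{\lambda^2}\intR\sech^2\!\Big(\frac{x}{\lambda}\Big)v^2\,dx,
\]
and using that $v$ is odd, Proposition~\ref{coercividad1} gives $\mathcal{B}_0(v)\ge \tfrac32\intR v_x^2\,dx$. Hence it suffices to establish the perturbative bound $\big|\mu\intR V_x\frac{\varphi}{\varphi_x}v^2\,dx\big|\le \intR v_x^2\,dx$, since then $\mathcal{B}(v)\ge(\tfrac32-1)\intR v_x^2\,dx=\tfrac12\intR v_x^2\,dx$. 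This is precisely where the smallness of $\mu$ (the threshold $\mu_0$ in Theorem~\ref{T1a}) must enter.

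To control the perturbation I would first record that, with $\varphi=\lambda\tanh(x/\lambda)$, the weight is $\frac{\varphi}{\varphi_x}=\frac\lambda2\sinh\!\big(\frac{2x}{\lambda}\big)$, an odd and exponentially growing function with $\big(\frac{\varphi}{\varphi_x}\big)_x=\cosh\!\big(\frac{2x}{\lambda}\big)$. Integrating by parts to transfer the derivative off $V_x$ gives
\[
\intR V_x\,\frac{\varphi}{\varphi_x}\,v^2\,dx
=-\intR V\cosh\!\Big(\frac{2x}{\lambda}\Big)v^2\,dx
-\lambda\intR V\sinh\!\Big(\frac{2x}{\lambda}\Big)v\,v_x\,dx ,
\]
which is convenient because it only requires the integrated decay of $V$ rather than pointwise control of $V'$. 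Indeed, setting $h:=V\cosh(2x)$, hypothesis~\eqref{hipotesisV} shows $h,h'\in L^1(\R)$, so $V\cosh(2x)$ is bounded and vanishes at infinity; once $\lambda$ is fixed so that $\cosh(2x/\lambda)$ and small polynomial multiples of it are dominated by $\cosh(2x)$, the sup-norms $\|x^2V\cosh(2x/\lambda)\|_{L^\infty}$ and $\|xV\sinh(2x/\lambda)\|_{L^\infty}$ are finite. The oddness of $v$ is then exploited through the Hardy inequality $\intR \frac{v^2}{x^2}\,dx\le 4\intR v_x^2\,dx$ (valid since $v(0)=0$): writing $v^2=x^2\cdot\frac{v^2}{x^2}$ in the first term and $v\,v_x=x\cdot\frac{v}{x}\cdot v_x$ in the second, Cauchy--Schwarz and Hardy bound each term by $C(\lambda,V)\intR v_x^2\,dx$.

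Collecting these estimates yields $\big|\mu\intR V_x\frac{\varphi}{\varphi_x}v^2\,dx\big|\le \mu\,C(\lambda,V)\intR v_x^2\,dx$, so choosing $\mu_0:=1/C(\lambda,V)$ and $\mu\in(0,\mu_0)$ closes the argument and proves~\eqref{coercividad}. I expect the main obstacle to be this perturbative bound: the weight $\frac\lambda2\sinh(2x/\lambda)$ grows exponentially, and the only thing that tames it is the $\cosh(2x)$ weight built into~\eqref{hipotesisV}, so the whole estimate hinges on matching these two exponential rates; the admissible range of $\lambda$ is dictated exactly by requiring $\cosh(2x/\lambda)$ to be controlled by $\cosh(2x)$. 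A second, more structural point is that one must recover the \emph{full} gradient term $\intR v_x^2\,dx$ on the right-hand side, which is possible only because $v$ is odd --- without $v(0)=0$ the Hardy step breaks down and the potential term cannot be absorbed into the coercive part.
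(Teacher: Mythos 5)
Your proof is correct, and it takes a genuinely different route from the paper's. The paper splits the gradient term in two, writing $\mathcal{B}=\mathcal{L}+\mathcal{K}$ with $\mathcal{L}(v)=\intR v_x^2\,dx-\frac{1}{\lambda^2}\intR \sech^2\left(\frac{x}{\lambda}\right)v^2\,dx$ and $\mathcal{K}(v)=\intR v_x^2\,dx+\mu\intR V_0 v^2\,dx$, where $V_0=-V_x\varphi/\varphi_x$; it gets $\mathcal{L}(v)\ge\frac12\intR v_x^2\,dx$ for odd $v$ as in Proposition \ref{coercividad1}, and proves $\mathcal{K}(v)\ge 0$ \emph{spectrally}, quoting Simon's theorem (Lemma \ref{simon}) on weakly coupled one-dimensional Schr\"odinger operators: for $\mu$ small, $-\frac{d^2}{dx^2}+\mu V_0$ has at most one negative eigenvalue, whose eigenfunction is even because $V_0$ is even, so an odd $v$ lies in the nonnegative subspace. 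Hypothesis \eqref{hipotesisV} enters there only to verify $\intR(1+x^2)|V_0|\,dx<\infty$ and to compute $\intR V_0\,dx=\intR V\cosh\left(\frac{2x}{\lambda}\right)dx$ --- the same integration by parts you perform, but used only at the level of integrals, not pointwise. You instead keep the full constant $\frac32$ from Proposition \ref{coercividad1} and absorb the potential term quantitatively, combining the pointwise integration by parts with the Hardy inequality, which is available precisely because oddness gives $v(0)=0$; this yields an explicit threshold $\mu_0=1/C(\lambda,V)$. What each approach buys: yours is elementary and self-contained, makes $\mu_0$ computable, and isolates exactly where oddness and the exponential weight in \eqref{hipotesisV} are used; the paper's is shorter once Simon's lemma is invoked, but is nonconstructive in $\mu_0$ and leans on an external spectral result. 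Note that both arguments require $\lambda>1$ so that $\cosh\left(\frac{2x}{\lambda}\right)$ (times polynomials) is dominated by $\cosh(2x)$; accordingly, the phrase ``$\lambda>0$ sufficiently small'' in the statement is evidently a slip for ``$\mu>0$ sufficiently small'', and your reading --- smallness in $\mu$ with $\lambda>1$ fixed --- is the one consistent with Theorem \ref{T1a} and with the paper's own proof.
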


\begin{proof}
	We introduce 
		\[\mathcal{L}(v) = \intR {v}^2_x dx - \frac{1}{\lambda^2} \intR\sech^2\left(\frac{x}{\lambda}\right)v^2 dx\]
	and 
		\[\mathcal{K}(v) = \intR {v}^2_x dx +\mu \intR V_0 v^2 dx, \]	
	where $V_0=- V_x \frac{\varphi}{\varphi_x}$.
	Then, 
		\[\mathcal{B}(v)=\mathcal{L}(v)+\mathcal{K}(v).\]
	Arguing as in the proof of Proposition \ref{coercividad1}, we write 
		\[\mathcal{L}(v) =\frac 12 \intR {v}^2_x dx + \frac 12 \left(\intR {v}^2_x dx- \frac{2}{\lambda^2} \intR\sech^2\left(\frac{x}{\lambda}\right)v^2 dx\right).\]
	Since $v$ is odd,
		\[\intR {v}^2_x dx- \frac{2}{\lambda^2} \intR\sech^2\left(\frac{x}{\lambda}\right)v^2 dx\ge 0,\]
	because the index $N$ of such an operator is the integer that satisfies $N<\frac 12 \sqrt{17}-\frac 12 <2 $.
	Hence, we get that
		\begin{equation}\label{coercividadL}
		\mathcal{L}(v)\ge \frac{1}{2} \intR v_x^2 dx.
		\end{equation} 
	Then, in order to get the (\ref{coercividad}) it will be sufficient to demonstrate that $\mathcal{K}(v)\ge 0$.
	
	\par To prove the positiveness of $\mathcal{K}$, we make use of the following result by Simon \cite[Theorem 2.5]{Simon} (see also \cite{Klaus} for improved results):

	\begin{lemma}\label{simon}
		Let $V_0$ be a non-identically zero potential that obeys \[\intR \left(1+x^2\right)|V_0(x)|dx < \infty.\] Then 
			\[-\frac{d^2}{dx^2}+\mu V_0 \]
		has a unique negative eigenvalue for all positive $\mu$ sufficiently small if and only if
			\begin{equation}\label{hipsimon}
			 \intR V_0(x)dx\le 0. 
			\end{equation}
			Moreover, since $V_0$ is even, such an eigenvalue is associated to an even eigenfunction.
	\end{lemma}	
	
	\begin{remark}
	We remark that in the case $\intR V_0>0$ there is no negative eigenvalue $-\frac{d^2}{dx^2}+\mu V_0$, $\mu>0$ sufficiently small.
	\end{remark}

	\par Notice that, from the definition of $\varphi$ and (\ref{hipotesisV}), we have 
		\begin{align*}\intR V_0 dx=-\intR V_x \frac{\varphi}{\varphi_x}dx=-\lambda\intR V_x \text{sinh}\left(\frac{x}{\lambda}\right)\text{cosh}\left(\frac{x}{\lambda}\right)dx
		\end{align*}
	We integrate by parts and get
		\begin{align*}
		\intR V_0 dx=\intR \cosh\left(\frac{2x}{\lambda}\right)V dx.
		\end{align*}
	Since $\lambda>1$, \eqref{hipotesisV} tells us that $V_0$ integrates in space.  
	Besides, since $V$ is a Schwartz function, 
		\[
		\intR \left(1+x^2\right)\left|V_x \frac{\varphi}{\varphi_x}\right|dx\le \intR \left(1+x^2\right)\left|V_x  \right| \cosh\left(\frac{2x}{\lambda}\right)dx< \infty.\]
	Then, Lemma \ref{simon} implies that there exists $\mu_0>0$ such that 
		\[-\frac{d^2}{dx^2}+\mu V_0 \]
	has a unique negative eigenvalue for all $\mu<\mu_0$ and $\lambda>1$. Since the corresponding eigenfunction is even, we have $\mathcal K(v)\geq 0$ for $v$ odd. 
\end{proof}

The conclusion that we obtain from Proposition \ref{coercividad} is that for $i=1, 2$, 
	\[B(u_i)\ge \frac12 \intR \left(\alpha u_i\right)_x^2dx.\]
This property of the bilineal form $B$ will allow us to get an estimation of the operator $\deri I(u(t))$ that will lead us to conclude the proof of Theorem \ref{T1a}.

\subsection{Estimates of the terms on (\ref{virial})}

\begin{lemma} \label{estimation3}
	Let $u$ be an odd solution of (\ref{NLS}). Then,
		\begin{equation}
		\|u\|^2_{H^1_\alpha(\R)} \le C\left(B(u_1)+B(u_2)\right).
		\end{equation}
	for some $C>0$.
\end{lemma}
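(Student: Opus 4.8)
The plan is to repeat the argument of Lemma \ref{estimation2} almost verbatim, the only structural change being that the coercivity of Proposition \ref{coercividad1} is replaced by its potential counterpart, Proposition \ref{pcoercividad}. The crucial observation is that the modified bilinear form $B$ already contains the potential term $-\mu\intR V_x\frac{\varphi}{\varphi_x}(\alpha w)^2\,dx$, and that this term has been \emph{fully absorbed} into the coercivity estimate \eqref{coercividad}: writing $v=\alpha w$ and using $\mathcal B(v)=B(w)$, Proposition \ref{pcoercividad} gives, for odd $u_i$ and $\lambda$ small, $B(u_i)\ge \tfrac12\intR(\alpha u_i)_x^2\,dx$. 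Consequently the potential plays no further role, and what remains is exactly the weighted-norm bookkeeping of Lemma \ref{estimation2}, now with coercivity constant $\tfrac12$ in place of $\tfrac32$.

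First I would control the $L^2$ part of the weighted norm. The odd-function inequality \eqref{b15}, namely $\intR v_x^2\,dx\ge \frac{2}{\lambda^2}\intR\sech^2(x/\lambda)v^2\,dx$, holds for \emph{every} $\lambda>0$ (the index computation carried out in the proof of Proposition \ref{pcoercividad} produces the same single even negative mode after the rescaling $x\mapsto x/\lambda$), so it remains available for the small $\lambda$ forced by the potential. Applying it to $v=\alpha u_i$ and combining with $B(u_i)\gtrsim \intR(\alpha u_i)_x^2\,dx$ yields, exactly as in \eqref{e0}, the bound $\intR\sech^4(x/\lambda)\,u_i^2\,dx\lesssim B(u_i)$, which is the weighted $L^2$ estimate corresponding to \eqref{cota11} (the admissible weight being now the faster-decaying $\sech^4(x/\lambda)$).

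Next I would recover the derivative part in the same way as in Lemma \ref{estimation2}: multiplying the coercivity bound by $\alpha^2$ I would write $\intR(\alpha u_i)_x^2\,dx\gtrsim \intR\alpha^2(\alpha u_i)_x^2\,dx$ and integrate by parts to obtain $\intR\alpha^4 (u_i)_x^2\,dx-\intR\bigl(2\alpha^2\alpha_x^2+\alpha^3\alpha_{xx}\bigr)u_i^2\,dx$, and then absorb the lower-order $u_i^2$ term using the $L^2$ bound just established. This produces the derivative estimate analogous to \eqref{cota12}, and summing the two estimates over $i=1,2$ gives $\|u\|_{H^1_\alpha(\R)}^2\lesssim B(u_1)+B(u_2)$.

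The only place the potential could have interfered is precisely the coercivity step, and this has already been settled in Proposition \ref{pcoercividad}, where the smallness of $\mu$ together with the decay hypothesis \eqref{hipotesisV} guarantees $\mathcal K(v)\ge 0$ on odd functions. Hence I do not expect a genuine new obstacle here: the remaining weight manipulations are purely algebraic and identical to the potential-free case. The one bookkeeping subtlety worth flagging is that the exponentially decaying weight is now tied to the small $\lambda$ of the coercivity (so one works with $\sech^4(x/\lambda)$ rather than $\sech(x)$), and this weight must be carried consistently into the subsequent decay argument for Theorem \ref{T1a}.
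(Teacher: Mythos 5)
Your strategy---reuse the weight manipulations of Lemma \ref{estimation2}, with Proposition \ref{pcoercividad} supplying the coercivity that absorbs the potential term---is exactly what the paper intends (its proof of this lemma reads ``Direct from Lemma \ref{estimation2}''). However, your treatment of $\lambda$ is off, and as written it opens a real gap. The phrase ``for $\lambda>0$ sufficiently small'' in the statement of Proposition \ref{pcoercividad} is a typo for ``$\mu>0$ sufficiently small'': the subsection fixes $\lambda\in(1,\infty)$, and the proof of that proposition genuinely needs $\lambda>1$, because the integrability of $V_0=-V_x\varphi/\varphi_x$ demanded by Simon's Lemma \ref{simon} is obtained from $|V_0|\lesssim |V_x|\cosh(2x/\lambda)$ together with hypothesis \eqref{hipotesisV}, and $\cosh(2x/\lambda)\lesssim\cosh(2x)$ fails for $\lambda<1$. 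So the coercivity you invoke ``for the small $\lambda$ forced by the potential'' is not available in that regime: nothing forces $\lambda$ to be small, and nothing is proved when it is. (Your observation that the odd-function inequality \eqref{b15} is scale invariant is correct, but that only concerns the $\mathcal L$ part; it is the $\mathcal K$ part that breaks for small $\lambda$.)

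Moreover, even granting coercivity for small $\lambda$, your conclusion would be $\intR\sech^4(x/\lambda)\bigl(u_i^2+(u_i)_x^2\bigr)\,dx\lesssim B(u_i)$, and for small $\lambda$ the weight $\sech^4(x/\lambda)\sim e^{-4|x|/\lambda}$ decays much faster than $\sech(x)$, so this does \emph{not} bound the $H^1_\alpha$ norm the lemma asserts; the step \eqref{e0}, which needs $\sech^4(x/\lambda)\gtrsim\sech(x)$ (i.e.\ $\lambda\ge 4$), is exactly where this breaks. This is more than the ``bookkeeping subtlety'' you flag at the end: as written, your argument proves a strictly weaker statement than the one claimed. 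The repair is immediate: take $\lambda=100$ exactly as in Lemma \ref{estimation2}. Proposition \ref{pcoercividad} applies for any fixed $\lambda>1$ once $\mu<\mu_0$, giving $B(u_i)\ge\frac12\intR(\alpha u_i)_x^2\,dx$, and then \eqref{e0}, \eqref{cota11}, \eqref{e1}, \eqref{cota12} go through word for word, yielding the stated bound with the $\sech(x)$ weight intact and no changes needed downstream in the proof of Theorem \ref{T1a}.
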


\begin{proof} Direct from Lemma \ref{estimation2}.
\end{proof}

\begin{lemma} \label{estimation}
	There exists $\varepsilon>0$ such that for every odd solution $u$ of (\ref{NLS}) satisfying 
		\begin{equation}\label{hip}
		\|u(t)\|_{H^1(\mathbb{R})}\le \varepsilon \quad \forall t \in \mathbb{R},
		\end{equation}
	then
		\begin{equation}\label{cotaI}
		-\deri I (u(t))\ge C \|u(t)\|^2_{H^1_\alpha(\R)}.
		\end{equation}
	where $C>0$.
\end{lemma}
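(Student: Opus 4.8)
The plan is to mimic the proof of Lemma \ref{estimation1} from the potential-free case, isolating the new contribution coming from the potential term $-\mu\intR \varphi V_x|u|^2\,dx$ in the virial identity \eqref{virial}. First I would rewrite
\[
-\deri I(u(t)) = B(u_1)+B(u_2) - \intR \varphi_x\left[F(|u|^2)-f(|u|^2)|u|^2\right]dx,
\]
where the bilinear form $B$ now \emph{already incorporates} the potential contribution $-\mu\intR\varphi V_x w^2\,dx$, exactly as defined just before Proposition \ref{pcoercividad}. The crucial point is that this reorganization is precisely why we proved the modified coercivity Proposition \ref{pcoercividad}: it guarantees $B(u_i)\ge \frac12\intR(\alpha u_i)_x^2\,dx$ for odd $u_i$, so the potential term has been absorbed into a form that remains positive definite. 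Thus, by Lemma \ref{estimation3}, the bilinear part still controls $\|u\|^2_{H^1_\alpha(\R)}$ from below, and the only remaining task is to absorb the semilinear term.

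Next I would estimate the nonlinear remainder $\intR\varphi_x[F(|u|^2)-f(|u|^2)|u|^2]\,dx$ exactly as in Lemma \ref{estimation1}. Since $\varphi_x=\sech^2(x/\lambda)$ and $|F(|u|^2)-f(|u|^2)|u|^2|\lesssim|u|^{p+1}$, the same integration-by-parts argument (writing $v=\sech(x/\lambda)u$, using $v(t,0)=0$ by oddness, then Young's inequality and the Sobolev embedding) gives
\[
\intR \sech^2\!\left(\tfrac{x}{\lambda}\right)|u|^{p+1}dx \lesssim \|u\|^{p-1}_{H^1(\R)}\,\intR|(\alpha u)_x|^2\,dx \lesssim \varepsilon^{p-1}\bigl(B(u_1)+B(u_2)\bigr),
\]
where the last step uses the modified coercivity of Proposition \ref{pcoercividad} in place of Proposition \ref{coercividad1}. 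Choosing $\varepsilon$ small enough makes this term a small fraction of $B(u_1)+B(u_2)$, so it can be absorbed, yielding $-\deri I(u(t))\gtrsim B(u_1)+B(u_2)\gtrsim \|u(t)\|^2_{H^1_\alpha(\R)}$, which is \eqref{cotaI}.

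The main obstacle I anticipate is ensuring that the potential term is genuinely controlled rather than merely estimated by brute force: a naive bound $|\mu\intR\varphi V_x|u|^2|\lesssim \mu\|u\|^2_{H^1_\alpha}$ would compete directly with the coercive lower bound and could only be absorbed for $\mu$ small \emph{relative to the coercivity constant}, which itself depends on $\lambda$. The elegant resolution already built into the setup is to fold the potential into the bilinear form and prove positivity via Simon's criterion (Lemma \ref{simon}), exploiting that the hypothesis \eqref{hipotesisV} forces $\intR V_0\,dx=\intR\cosh(2x/\lambda)V\,dx$ to control the sign of the potential perturbation, and that oddness of $u_i$ kills the single even eigenfunction responsible for any negative eigenvalue. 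Hence the delicate interplay is between the choice of $\lambda>1$ (needed for \eqref{hipotesisV} to apply and for the $\sech^2$ index estimate $N<\tfrac12\sqrt{17}-\tfrac12<2$) and the threshold $\mu_0$ furnished by Lemma \ref{simon}; once both are fixed, the semilinear absorption proceeds verbatim as before.
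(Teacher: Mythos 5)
Your proposal is correct and follows essentially the same route as the paper: you decompose $-\deri I$ using the modified bilinear form $B$ that absorbs the potential term, invoke Proposition \ref{pcoercividad} (via Simon's criterion and oddness) together with Lemma \ref{estimation3} to control $\|u\|^2_{H^1_\alpha(\R)}$ from below, and absorb the semilinear remainder by replicating the integration-by-parts and Sobolev argument of Lemma \ref{estimation1} with smallness of $\varepsilon$. This is precisely the paper's proof, including the correct reading of the interplay between $\lambda>1$, hypothesis \eqref{hipotesisV}, and the threshold $\mu_0$ from Lemma \ref{simon}.
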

	
\begin{proof}
	The virial identity we have is
		\begin{equation*}
		\begin{aligned}
		-\deri I(t)= 
		&~{} 2 \intR \varphi_x |u_x|^2 dx - \frac{1}{2}\intR \varphi_{xxx} |u|^2 dx-\mu\intR \varphi V_x |u|^2dx \\
		& ~{} - \intR \varphi_x \left[F\left(|u|^2\right)-f(|u|^2)|u|^2\right] dx\\
		=&~{} B(u_1) + B(u_2) - \intR \varphi_x \left[F\left(|u|^2\right)-f(|u|^2)|u|^2\right] dx.
		\end{aligned}
		\end{equation*}
	As we already have an estimation for $B(u_1)+B(u_2)$ given by Lemma \ref{estimation3}, we need to check that the remaining terms can be controled. Replicating the proof of Lemma \ref{estimation1}, we get that 
		\begin{align*}
		 \intR \varphi_x \left[ F(|u|^2)-f(|u|^2)|u|^2 \right]dx 
		 & \lesssim \intR \sech^2\left(\frac{x}{\lambda}\right) |u|^{p+1}dx\\
		 & \lesssim \|u \|^{p-1}_{H^1(\R)} \intp \left|\left(\sech \left(\frac{x}{\lambda}\right)u_1\right)_x\right|^2 dx \\
		 & ~{} \quad + \|u \|^{p-1}_{H^1(\R)} \intp \left|\left(\sech \left(\frac{x}{\lambda}\right)u_2\right)_x\right|^2 dx.
		 \end{align*}	
	Thus, Proposition \ref{pcoercividad} implies that 
		\begin{align*}
		\intR \varphi_x \left[ F(|u|^2)-f(|u|^2)|u|^2 \right]dx 
		\lesssim \|u \|^{p-1}_{H^1(\R)} \big( B(u_1)+B(u_2) \big).
		\end{align*}	
	Now, since $\|u \|_{H^1(\R)}$ is small enough, we conclude. (In the defocusing case, this condition is not needed.)
\end{proof}	
	
We can modify the proof of Proposition \ref{key1}, using Lemma \ref{estimation} instead of Lemma \ref{estimation1} to obtain the following:

\begin{proposition}
	There exists a constant $C>0$ such that 
		\begin{equation}\label{key}
		\intp \|u(t)\|_{H^1_\alpha(\R)}^2 dt \le C\epsilon^2.
		\end{equation}
\end{proposition}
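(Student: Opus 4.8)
The plan is to mirror the proof of Proposition~\ref{propositionkey} almost verbatim, the only change being that the monotonicity estimate now comes from Lemma~\ref{estimation} (inequality \eqref{cotaI}) rather than from Lemma~\ref{estimation1}, so that the presence of the potential term is already absorbed. Concretely, I would first fix $\tau>0$ and integrate the pointwise-in-time differential inequality $-\deri I(u(t)) \ge C\|u(t)\|_{H^1_\alpha(\R)}^2$ over $[0,\tau]$, obtaining by the fundamental theorem of calculus
\[ C\int_0^\tau \|u(t)\|_{H^1_\alpha(\R)}^2\,dt \le I(u(0)) - I(u(\tau)). \]

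Next I would control the right-hand side uniformly in $\tau$. Since the hypotheses of Lemma~\ref{estimation} require the uniform smallness \eqref{hip} — which holds for all $t\in\R$ by conservation of mass and energy together with the smallness of the data, and which holds with no smallness assumption in the defocusing case — the functional $I(u(t))$ is well defined and bounded in time exactly as in Lemma~\ref{VirialIdentity2}. By H\"older's inequality,
\[ |I(u(t))| \le \|u(t)\|_{L^2(\R)}\,\|u_x(t)\|_{L^2(\R)} \le \|u(t)\|_{H^1(\R)}^2 \le \varepsilon^2 \qquad \forall t, \]
so in particular $I(u(0))-I(u(\tau)) \le 2\varepsilon^2$. (Equivalently, since $-\deri I\ge 0$ the functional $I$ is nonincreasing, and the endpoint term $I(u(\tau))$ is bounded below by $-\varepsilon^2$.) Combining the two displays yields $\int_0^\tau \|u(t)\|_{H^1_\alpha(\R)}^2\,dt \le C\varepsilon^2$ with a constant independent of $\tau$; letting $\tau\to\infty$ and invoking monotone convergence gives \eqref{key}.

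I do not expect a genuine obstacle here, since all the analytic difficulty has already been discharged upstream: the coercivity of the modified bilinear form against the potential perturbation (Proposition~\ref{pcoercividad}, via Simon's Lemma~\ref{simon}) and the absorption of the nonlinear remainder $\int_\mathbb{R} \varphi_x\bigl[F(|u|^2)-f(|u|^2)|u|^2\bigr]\,dx$ are both built into the statement of Lemma~\ref{estimation}. The single point that deserves attention is that the endpoint term $I(u(\tau))$ must stay controlled as $\tau\to\infty$; this is exactly why one needs the \emph{uniform-in-time} bound \eqref{hip} and not merely a bound on the initial data. That uniform bound is supplied by the conservation laws once the data are small in the focusing case, and is automatic in the defocusing case, so the argument closes in both regimes.
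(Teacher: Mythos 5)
Your proposal is correct and follows essentially the same route as the paper, which proves this proposition precisely by repeating the proof of Proposition \ref{propositionkey} with the monotonicity input \eqref{cotaI} from Lemma \ref{estimation} replacing Lemma \ref{estimation1}, exactly as you do. If anything, your treatment of the endpoint term is slightly more careful: you bound $|I(u(\tau))|\le \varepsilon^2$ for all $\tau$ via H\"older and the uniform-in-time bound \eqref{hip}, whereas the paper's written inequality $I(u(0))-I(u(\tau))\le C\, I(u(0))$ tacitly assumes $I(u(\tau))\ge 0$.
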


\subsection{Proof main result}

\bigskip 

\textit{Step 1: The $L^2$ norm tends to zero:}

\bigskip

Let $\varphi \in C^{\infty}(\R)$. 
Since $\im \intR \varphi V |u|^2 dx =0$, computing as in Subsection \ref{proof1}, we have 
	\begin{equation}\label{I22}
	\deri \left(\frac{1}{2}\intR \varphi|u(t)|^2\right)=\im \intR \varphi_x \overline{u} u_{x} dx
	\end{equation}
This identity implies that
	\begin{align*}
	\left|\deri \left(\frac{1}{2}\intR \varphi|u(t)|^2dx\right)\right|\lesssim \intR |\varphi_x| |\overline{u}(t)|^2 dx +\intR |\varphi_x| |u_{x}(t)|^2 dx.
	\end{align*} 
Taking $\varphi(x)=\sech(x)$ we obtain
	\begin{align*}
	\left| \deri \|u(t)\|^2_{L^2_\alpha(\R)}\right|
	& = \left|\deri \left(\intR \sech(x)|u(t)|^2\right)\right| \\
	&\lesssim \intR \sech(x) |\overline{u}(t,x)|^2 dx +\intR \sech(x) |u_{x}(t,x)|^2 dx = \|u(t)\|^2_{H^1_\alpha(\R)}.
	\end{align*} 
From (\ref{key}), there exists a sequence ${t_n} \in \R$, $t_n \to \infty$ such that $\|u(t_n)\|^2_{L^2_\alpha(\R)}\to 0.$
Consider $t \in \R$, integrate over $[t_n, t]$, and take $t_n \to \infty$. Then 
	\[\|u(t)\|^2_{L^2\alpha(\R)}\lesssim \int_t^{\infty} \|u(t)\|^2_{H^1_\alpha(\R)} dt. \]
Passing to the limit
	\begin{equation}\label{l2}
	\lim_{t\to \infty}\|u(t_n)\|_{L^2_\alpha(\R)}=0.
	\end{equation}
	
\bigskip

\noindent \textit{Step 2: The $L^{\infty}$ norm tends to zero:}

\bigskip 

One uses the same  arguments as in Subsection \ref{proof1}. We skip the proof. 
%
%

\bigskip

\section{The Hartree Equation. Proof of Theorem \ref{TH1}}\label{proof_Hartree}

Our goal in this section is to extend Theorem \ref{T1} to the Hartree equation,  
	\begin{equation}\label{Hartree}
	 iu_t+u_{xx}=\sigma \left( |x|^{-a} * |u|^2\right)u, \quad (t, x) \in \mathbb{R}\times \mathbb{R}
	 \end{equation}
where $\sigma=\pm 1$ and $0<a<1$. We start out with a virial identity.

\subsection{Virial Identity} 

As before (see \eqref{I}), let us consider $\varphi \in C^{\infty}(\mathbb{R})$ bounded and let 
	\begin{equation}\label{J}
	J(u(t)):=\im\intR \varphi(x) u(t,x)\overline{u}_x(t,x)dx, 
	\end{equation}
then we state the following result.

\begin{lemma}
	Let $u \in H^1(\R)$ be a solution of (\ref{Hartree}), then
		\begin{equation}\label{virialh}
		-	\deri J(u(t))= 2 \intR \varphi_x |u_x|^2 dx - \frac{1}{2}\intR \varphi_{xxx} |u|^2 dx  +\sigma a \intR \varphi \left(\frac{x}{|x|^{a+2}}*|u|^2\right) |u|^2 dx.
		\end{equation}
\end{lemma}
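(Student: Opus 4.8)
The plan is to follow verbatim the scheme of Lemma \ref{VirialIdentity1}, substituting the Hartree nonlinearity into the general identity obtained there, and then to extract the singular kernel $x/|x|^{a+2}$ by an integration by parts in the convolution variable. First I would reuse equation \eqref{a1}, which depends only on the Hamiltonian structure $iu_t=-u_{xx}+g(u)$ and not on the particular form of $g$:
\[ \deri J(u(t)) = -2\re\intR \varphi (iu_t)\overline{u}_x\,dx - \re\intR \varphi_x(iu_t)\overline{u}\,dx. \]
Inserting $iu_t=-u_{xx}+\sigma(|x|^{-a}*|u|^2)u$ from \eqref{Hartree}, the terms carrying $-u_{xx}$ reproduce exactly the computation of Lemma \ref{VirialIdentity1} and contribute $2\intR\varphi_x|u_x|^2\,dx-\tfrac12\intR\varphi_{xxx}|u|^2\,dx$ to $-\deri J$. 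It therefore remains only to treat the nonlinear terms.

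Writing $K:=|x|^{-a}*|u|^2$, the key observation is that $K$ is real-valued, so that $\re(Ku\overline{u}_x)=\tfrac12 K(|u|^2)_x$ and $\re(K|u|^2)=K|u|^2$. The nonlinear contribution to $-\deri J$ is then
\[ \sigma\Big(\intR \varphi K (|u|^2)_x\,dx + \intR \varphi_x K|u|^2\,dx\Big) = \sigma\intR (\varphi|u|^2)_x\,K\,dx. \]
Since $\varphi$ is bounded, $|u|^2\in W^{1,1}(\R)$, and $K$ is bounded (because $|x|^{-a}\in L^1+L^\infty$ and $|u|^2\in L^1\cap L^\infty$), all boundary terms vanish at $\pm\infty$ and I may integrate by parts to move the derivative onto $K$, obtaining $-\sigma\intR \varphi|u|^2\,K_x\,dx$. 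Finally one identifies $K_x$ with $(|x|^{-a})'*|u|^2$; since $(|x|^{-a})'=-a\,x/|x|^{a+2}$, this yields exactly $\sigma a\intR\varphi\big(\tfrac{x}{|x|^{a+2}}*|u|^2\big)|u|^2\,dx$, the claimed term, with the correct sign.

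The main obstacle is precisely this last step: the kernel $x/|x|^{a+2}$ behaves like $|x|^{-a-1}$ near the origin and is \emph{not} locally integrable for $0<a<1$, so the convolution must be read in the principal value sense and the differentiation of $K$ is not classical. The resolution rests on the oddness of $(|x|^{-a})'$: for the leading singular part one has $\int_{|z|<\delta}(|z|^{-a})'\,dz=0$ by antisymmetry, while the remainder is controlled by $\int_{|z|<\delta}|z|^{-a}\,dz<\infty$, which converges precisely because $a<1$. I would make this rigorous by regularizing the potential, e.g. $W_\varepsilon(x)=(x^2+\varepsilon^2)^{-a/2}$, carrying out the now classical computation with $W_\varepsilon$, and letting $\varepsilon\to 0$: the terms $\varphi|u|^2\,(W_\varepsilon*|u|^2)$ converge by dominated convergence, while $W_\varepsilon'*|u|^2\to \mathrm{p.v.}\,(W'*|u|^2)$ thanks to the cancellation above (using $u\in H^1\hookrightarrow C^{1/2}$, and, if more regularity of $|u|^2$ is needed, a density argument on Schwartz data together with continuity of both sides in the $H^1$ topology). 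The well-definedness and time-boundedness of $J$ follow as in Lemma \ref{VirialIdentity1}, and the finiteness of the singular term is guaranteed by the Hardy--Littlewood--Sobolev bound already invoked in the introduction.
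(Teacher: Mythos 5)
Your proposal is correct and follows essentially the same route as the paper: reuse \eqref{a1}, substitute the Hartree nonlinearity, combine the two nonlinear terms into $\sigma\intR (\varphi |u|^2)_x \left(|x|^{-a}*|u|^2\right)dx$, integrate by parts, and differentiate the convolution kernel to produce $-a\,x/|x|^{a+2}$. The extra care you devote to the non-locally-integrable kernel (regularization, principal value via oddness of $(|x|^{-a})'$) is a refinement the paper leaves implicit: it only remarks, in the subsequent positivity analysis, that the computations are justified by compactly supported approximation, where after symmetrization the integrand $\left(\varphi(x)-\varphi(y)\right)(x-y)|x-y|^{-a-2}\lesssim |x-y|^{-a}$ becomes locally integrable (one may also note that $\left(|x|^{-a}*|u|^2\right)_x = |x|^{-a}*(|u|^2)_x$ is classically defined since $(|u|^2)_x\in L^1$ for $u\in H^1$).
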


\begin{proof} 
	Recall \eqref{a1} from the proof of Lemma \ref{virial},
		\begin{align*}
		\deri J(u(t))= - 2\re \intR \varphi (i u_t) \overline{u}_x dx - \re \intR \varphi_x \overline{u} (i u_t) dx. 
		\end{align*}
	We use (\ref{Hartree}) to get
		\begin{align*}
		\deri J(u(t))
		&= 2\re \intR \varphi u_{xx} \overline{u}_x dx + \re \intR \varphi_x \overline{u} u_{xx} dx \\
		&~{}- \sigma 2\re \intR \varphi \left(|x|^{-a}*|u|^2\right)u \overline{u}_x dx - \sigma\re \intR \varphi_x \left(|x|^{-a}*|u|^2\right)u\overline{u} dx \\
	 	&=  \intR \varphi \left(|u_x|^2\right)_x dx + \re \intR \varphi_x \overline{u} u_{xx} dx\\
	 	&~{} -\sigma \intR \varphi \left(|x|^{-a}*|u|^2\right)\left(|u|^2\right)_x dx - \sigma \intR \varphi_x \left(|x|^{-a}*|u|^2\right)|u|^2 dx. 
	 	\end{align*}
	We integrate by parts once on the last term and twice on the second term to obtain
		\begin{align*}
		\deri J(u(t))
		&= -2 \intR \varphi_x |u_x|^2 dx - \re \intR \varphi_{xx} \overline{u} u_{x} dx  + \sigma \intR \varphi \left(|x|^{-a}*|u|^2\right)_x |u|^2 dx\\
		&= -2 \intR \varphi_x |u_x|^2 dx - \frac{1}{2}\intR \varphi_{xx} \left(|u|^2\right)_x dx + \sigma \intR \varphi \left(|x|^{-a}*|u|^2\right)_x |u|^2 dx\\
		&= -2 \intR \varphi_x |u_x|^2 dx + \frac{1}{2}\intR \varphi_{xxx} |u|^2 dx  + \sigma \intR \varphi \left(|x|^{-a}*|u|^2\right)_x |u|^2 dx.
		\end{align*}
	Computing the derivative on the last term, 
		\[	\deri J(u(t))= -2 \intR \varphi_x |u_x|^2 dx + \frac{1}{2}\intR \varphi_{xxx} |u|^2 dx  -\sigma a \intR \varphi \left(\frac{x}{|x|^{a+2}}*|u|^2\right)_x |u|^2 dx.\]
\end{proof}

Let us analyze the RHS of \eqref{virialh}. Notice that if $\varphi$ is a non-decreasing weight function, the integral on the last term in \eqref{virialh} is positive:
	 \begin{equation}\label{positivity}
	 \intR \intR \varphi(x) \frac{x-y}{|x-y|^{a+2}}|u(y)|^2|u(x)|^2 dy dx\ge 0.
	\end{equation} 
Indeed, we compute (all the computations below are justified by choosing suitably compactly supported functions, and taking the standard limit procedure)
	 \begin{align*}
	 \intR \varphi \left(|x|^{-a}*|u|^2\right)_x |u|^2 dx=&-a\intR \varphi \left(\frac{x}{|x|^{a+2}}*|u|^2\right)|u|^2 dx \\
	 & =-a \intR \intR \varphi(x) \frac{x-y}{|x-y|^{a+2}}|u(y)|^2|u(x)|^2 dy dx .
	 \end{align*} 
We have that 
	 \begin{align*}
	 \intR \intR \varphi(x) \frac{x-y}{|x-y|^{a+2}}|u(y)|^2|u(x)|^2 dy dx 
	& =  \intR \intR\left( \varphi(x)-\varphi(y)\right) \frac{x-y}{|x-y|^{a+2}}|u(y)|^2|u(x)|^2 dy dx\\
 	& ~{} + \intR \intR \varphi(y) \frac{x-y}{|x-y|^{a+2}}|u(y)|^2|u(x)|^2 dy dx.
	\end{align*} 
After a change of variables on the second integral, we get
	 \begin{align*}
	 \intR \intR \varphi(x) \frac{x-y}{|x-y|^{a+2}}|u(y)|^2|u(x)|^2 dy dx 
	  =&  \intR \intR\left( \varphi(x)-\varphi(y)\right) \frac{x-y}{|x-y|^{a+2}}|u(y)|^2|u(x)|^2 dy dx\\
	 & ~{} - \intR \intR \varphi(x) \frac{x-y}{|x-y|^{a+2}}|u(y)|^2|u(x)|^2 dy dx.
	 \end{align*} 
 Then, we obtain that
 	\begin{align*}
	\intR \intR \varphi(x) \frac{x-y}{|x-y|^{a+2}}|u(y)|^2|u(x)|^2 dy dx = \frac{1}{2} \intR \intR\left( \varphi(x)-\varphi(y)\right) \frac{x-y}{|x-y|^{a+2}}|u(y)|^2|u(x)|^2 dy dx.
	\end{align*} 
If $\varphi$ is non-decreasing, then $\left(\varphi(x)-\varphi(y)\right) (x-y)\ge 0$. Moreover, 
	 \[\intR \intR\left( \varphi(x)-\varphi(y)\right) \frac{x-y}{|x-y|^{a+2}}|u(y)|^2|u(x)|^2 dy dx\ge 0.
	 \]
 This implies that 
 	\[ \intR \intR \varphi(x) \frac{x-y}{|x-y|^{a+2}}|u(y)|^2|u(x)|^2 dy dx\ge 0,\]
	as claimed.

\subsection{Proof of Theorem \ref{TH1}}

Assume $\sigma=1$ in \eqref{Hartree} and let $u=u_1+iu_2\in H^1(\R)$ be an odd solution of this equation. As done in Section \ref{withoutpotential}, we define the bilinear form
	\[B(u_i)=2\intR \varphi_x {u_i}_x^2 dx - \frac{1}{2} \intR\varphi_{xxx} u_i^2 dx, \quad i=1,2.\]
This means that we can re-write the virial identity \eqref{J} as 	
	\begin{align}\label{J2}
	-\deri J(u(t)) = B(u_1)+B(u_2) - \sigma \intR \varphi \left(|x|^{-a}*|u|^2\right)_x |u|^2 dx.	
	\end{align}
Now, as usual, take $\lambda>1$, $\varphi=\lambda \tanh\left(\frac{x}{\lambda}\right)$ and $\alpha=\sqrt{\varphi_x}$. From (\ref{b11}) and (\ref{b12}) and reasoning as before, we have that 
	\[B(u_i)=2\intR \left(\alpha u_i\right)^2_x dx - \frac{1}{\lambda^2} \intR\sech^2\left(\frac{x}{\lambda}\right)\left(\alpha u_i \right)^2 dx, \quad i=1,2.\]	
Thus, Proposition \ref{coercividad1} implies that
	\begin{align}\label{g1}
	B(u_i) \ge \frac{3}{2} \intR \left(\alpha u_i\right)_x^2 dx, \quad \text{for } i=1,2.
	\end{align}
Moreover, if we consider 
	\[\|u(t)\|_{H^1_\alpha(\R)}=\intR \sech(x)u^2(t,x)dx+ \intR \sech(x)u_x^2(t,x) dx,
	\]
then, from Proposition \ref{estimation2} we obtain
	\begin{equation}\label{g2}
	\|u\|_{H^1_\alpha(\R)}^2\lesssim B(u_1) + B(u_2).
	\end{equation}
Since 
 	\[ \intR \intR \varphi(x) \frac{x-y}{|x-y|^{a+2}}|u(y)|^2|u(x)|^2 dy dx\ge 0,
	\]
it follows that
	\[-\deri J(u(t)) \ge \|u\|_{H_\alpha^1(\R)}^2.\]
Replicating the proof of Proposition \ref{propositionkey}, we use the last inequality to obtain
	\begin{equation}\label{key4}
	\intp \|u(t)\|_{H^1_\alpha(\R)}^2 dt \le C \varepsilon^2.
	\end{equation}	
	
	\noindent
\textit{Step 1: The $L^2$ norm tends to zero:} Let $\phi \in C^{\infty}(\R)$ bounded. Then we compute
	\begin{align*}
	\deri \left(\frac{1}{2}\intR \phi|u(t)|^2 dx\right) 
	&=\re \intR \phi\overline{u} u_t dx \\
	&=-\re \intR i \phi\overline{u}  \left(iu_t\right)  dx\\
	&= \im \intR \phi \overline{u} \left(iu_t\right) dx.
	\end{align*} 
Hence, using equation (\ref{Hartree}) with $\sigma=1$ and integrating by parts
	\begin{align*}
	\deri \left(\frac{1}{2}\intR \phi|u(t)|^2 dx\right) 
	& = -\im \intR \phi \overline{u} u_{xx} dx + \im \intR \phi \left( |x|^{-a}* |u|^2 \right)u \overline{u} dx \\
	& = \im \intR \phi \overline{u}_x u_{x} dx+\im \intR \phi \left( |x|^{-a}* |u|^2 \right)|u|^2 dx\\
	& = \im \intR \phi |u_x|^2 dx + \im \intR \phi_x \overline{u} u_{x} dx + \im \intR \phi \left( |x|^{-a}* |u|^2 \right)|u|^2 dx.
	\end{align*} 
Since the only integral that can have an imaginary part is the second one, we have that
	\begin{equation}\label{h1}
	\deri \left(\frac{1}{2}\intR \phi|u(t)|^2\right)=\im \intR \phi_x \overline{u} u_{x} dx
	\end{equation}
Thus
	\begin{align*}
	\left|\deri \left(\frac{1}{2}\intR \phi|u(t)|^2dx\right)\right|
	& \le \intR |\phi_x| |\overline{u}(t)| |u_{x}(t)| dx\\
	&\lesssim \intR |\phi_x| |\overline{u}(t)|^2 dx +\intR |\phi_x| |u_{x}(t)|^2 dx.
	\end{align*} 
We take $\phi(x)=\sech(x)$ and get
	\begin{align*}
	\left| \deri \|u(t)\|^2_{L^2_\alpha(\R)}\right|
	& = \left|\deri \left(\intR \sech(x)|u(t)|^2\right)\right| \\
	&\lesssim \intR \sech(x) |\overline{u}(t,x)|^2 dx +\intR \sech(x) |u_{x}(t,x)|^2 dx = \|u(t)\|^2_{H^1_\alpha(\R)}.
	\end{align*} 
\par From \eqref{key4}, there exists a sequence ${t_n} \in \R$, $t_n \to \infty$ such that $\|u(t_n)\|^2_{L^2_\alpha(\R)}\to 0.$ Consider $t \in \R$, integrate over $[ t, t_n]$, and take $t_n \to \infty$. Then 
	\[\|u(t)\|^2_{L^2_\alpha(\R)}\lesssim \int_t^{\infty} \|u(s)\|^2_{H^1_\alpha(\R)} ds. \]
In consequence
	\begin{equation}\label{key5}
	\lim_{t\to \infty}\|u(t)\|_{L^2_\alpha(\R)}=0.
	\end{equation}
The rest of the proof is exactly the same as in the proofs of Theorems \ref{T1} and \ref{T1a}.

\bigskip

%
%
%


%

\end{document}